\newtheorem{Theorem}{Theorem}[section]
\newtheorem{Proposition}[Theorem]{Proposition}
\newtheorem{Lemma}[Theorem]{Lemma}
\theoremstyle{definition}
\newtheorem{Example}[Theorem]{Example}
\numberwithin{equation}{section}
\DeclareMathOperator{\Span}{Span}
\newcommand{\comment}[1]{}
\begin{document}

\title{Lelong numbers of bidegree $(1,1)$  currents on multiprojective spaces}

\author{Dan Coman}
\thanks{D.\ Coman is partially supported by the NSF Grant DMS-1700011}
\address{Department of Mathematics, Syracuse University, Syracuse, NY 13244-1150, USA}
\email{dcoman@syr.edu}

\author{James Heffers}
\address{Department of Mathematics, University of Michigan, Ann Arbor, MI 48109-1043, USA}
\email{heffers@umich.edu}

\subjclass[2010]{Primary 32U25; Secondary 32U05, 32U40}
\keywords{Positive closed currents, plurisubharmonic functions, Lelong numbers}

\date{January 29, 2019}

\begin{abstract} 
Let $T$ be a positive closed current of bidegree $(1,1)$ on a multiprojective space $X={\mathbb P}^{n_1}\times\ldots\times{\mathbb P}^{n_k}$. For certain values of $\alpha$, which depend on the cohomology class of $T$, we show that the set of points of $X$ where the Lelong numbers of $T$ exceed $\alpha$ have certain geometric properties. We also describe the currents $T$ that have the largest possible Lelong number in a given cohomology class, and the set of points where this number is assumed.
\end{abstract}

\maketitle

\tableofcontents

\section{Introduction}\label{S:intro}
Let $M$ be a complex manifold of dimension $n$ and $T$ be a positive closed current of bidimension $(p,p)$ (or bidegree $(n-p,n-p)$) on $M$. Consider the upper level sets  
\[E_\alpha(T):=\{x\in M:\,\nu(T,x)\geq\alpha\}\,,\,\;E^+_\alpha(T):=\{x\in M:\,\nu(T,x)>\alpha\}\,,\]
where $\nu(T,x)$ is the Lelong number of $T$ at $x\in M$ and $\alpha\geq0$ (see \cite{D93,Ho} for the definition and properties of Lelong numbers). A fundamental theorem of Siu \cite{Siu74} states that for $\alpha>0$, $E_\alpha(T)$ is an analytic subvariety of $M$ of dimension at most $p$. It follows that $E^+_0(T)$ is an at most countable union of analytic subvarieties of $M$ of dimension at most $p$. 

In the case of the projective space $M={\mathbb P}^n$, explicit geometric descriptions of the sets $E^+_\alpha(T)$ are obtained in \cite{C06} and \cite{CT15}. Further results in this direction are given in \cite{He17,He19}. We also note that the case of currents of bidimension $(1,1)$ on multiprojective spaces is studied in \cite[Section 4]{CT15}.

Our goal here is to study the geometric properties of the sets $E^+_\alpha(T)$ for positive closed currents $T$ of bidegree $(1,1)$ on multiprojective spaces. Throughout the paper we let  
\begin{equation}\label{e:X}
X:={\mathbb P}^{n_1}\times\ldots\times{\mathbb P}^{n_k}={\mathbb P}^{n_1}_{[z^1]}\times\ldots\times{\mathbb P}^{n_k}_{[z^k]}\,,\,\;n:=n_1+\ldots+n_k\,,
\end{equation}
where $z^j=(z^j_0,\ldots,z^j_{n_j})\in{\mathbb C}^{n_j+1}$ and $[z^j]=[z^j_0:\ldots:z^j_{n_j}]$ denote the homogeneous coordinates on $\mathbb P^{n_j}$. Let 
\begin{equation}\label{e:proj}
\Pi_j:{\mathbb C}^{n_j+1}\setminus\{0\}\to\mathbb P^{n_j},\;\Pi_j(z^j)=[z^j]\,,\quad\pi_j:X\to\mathbb P^{n_j},
\end{equation}
be the canonical projection, and respectively the projection onto the $j$-th factor. Set 
\[\omega_j=\pi^\star_j\omega_{FS}\,,\,\;1\leq j\leq k\,,\]
where $\omega_{FS}$ denotes the Fubini-Study K\"ahler form on a projective space $\mathbb P^{n_j}$. The Dolbeault cohomology group $H^{1,1}(X,\mathbb R)$ is generated by the forms $\omega_1,\ldots,\omega_k$. 

We let 
\begin{equation}\label{e:om}
(a_1,\ldots,a_k)\in(0,+\infty)^k\,,\,\;a:=a_1+\ldots+a_k\,,\,\;\omega=\omega_{a_1,\ldots,a_k}=a_1\omega_1+\ldots+a_k\omega_k\,,
\end{equation}
and we denote by 
\begin{equation}\label{e:T}
\mathcal T=\mathcal T_{a_1,\ldots,a_k}(X)
\end{equation}
the space of positive closed currents $T$ of bidegree $(1,1)$ on $X$ in the cohomology class of $\omega$ (i.e. $T\sim\omega$).

\smallskip

In the above setting, our first result gives a description of the currents in $\mathcal T$ with the largest possible Lelong number. It is the analogue of \cite[Proposition 2.3]{CG09} to the case of multiprojective spaces (see also Proposition \ref{P:proj} in the following section). The case of bidegree $(1,1)$ currents on ${\mathbb P}^1\times{\mathbb P}^1$ was treated in \cite[Proposition 4.1]{CG09}.

\begin{Theorem}\label{T:t1}
If $T\in\mathcal T$ then $\nu(T,x)\leq a$ for all $x\in X$. If $E_a(T)\neq\emptyset$ then there exist proper linear subspaces $L_j\subset{\mathbb P}^{n_j}$ of dimension $\ell_j$, and surjective linear maps $\eta_j:{\mathbb C}^{n_j+1}\to{\mathbb C}^{n_j-\ell_j}$, $1\leq j\leq k$, such that 
\[E_a(T)=L_1\times\ldots\times L_k\,,\,\;L_j=\Pi_j(\ker\eta_j\setminus\{0\})\,,\,\;T=\wp^\star S\,,\]
where $\wp=[\eta_1]\times\ldots\times[\eta_k]:X\dashrightarrow Y:={\mathbb P}^{n_1-\ell_1-1}\times\ldots\times{\mathbb P}^{n_k-\ell_k-1}$, $[\eta_j]:{\mathbb P}^{n_j}\dashrightarrow{\mathbb P}^{n_j-\ell_j-1}$ is the projection induced by $\eta_j$, and $S\in\mathcal T_{a_1,\ldots,a_k}(Y)$ is a current with $E_a(S)=\emptyset$.
\end{Theorem}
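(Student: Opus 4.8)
The plan is to treat the two assertions separately: I would derive the bound $\nu(T,x)\le a$ by intersection theory, and the structural statement by a Siu decomposition combined with the projective model of Proposition~\ref{P:proj}.

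For the bound, fix $x\in X$ and a rational curve $C\ni x$ of multidegree $(1,\dots,1)$, i.e.\ the image of a map $\mathbb P^1\to X$ whose $j$-th component is a linear embedding onto a line in $\mathbb P^{n_j}$. Then $\pi_j|_C$ has degree $1$, so $\omega_j\cdot[C]=1$ and $\int_C T=[T]\cdot[C]=\sum_j a_j=a$. For $C$ chosen generically through $x$ the wedge product $T\wedge[C]$ is a well-defined positive measure on $C$ of total mass $a$, and since $C$ is smooth at $x$ Demailly's comparison theorem gives $\nu(T\wedge[C],x)\ge\nu(T,x)\,\nu([C],x)=\nu(T,x)$. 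As $T\wedge[C]$ is a measure on $C$, its Lelong number at $x$ is its atomic mass, whence $\nu(T,x)\le(T\wedge[C])(\{x\})\le\int_C T=a$. Because the left-hand side is independent of $C$, this proves $\nu(T,x)\le a$ for all $x$.

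For the structure, fix $x_0\in E_a(T)$ and write the Siu decomposition $T=\sum_i\lambda_i[V_i]+R$, where the $V_i$ are the irreducible hypersurface components of $E^+_0(T)$, of multidegree $(d^i_1,\dots,d^i_k)$, and $R$ carries no divisorial mass, say $R\sim\sum_j r_j\omega_j$. Cohomology forces $\sum_i\lambda_i d^i_j+r_j=a_j$. Bounding each multiplicity by the same generic curve, $\operatorname{mult}_{x_0}V_i\le V_i\cdot C=\sum_j d^i_j$, and bounding $\nu(R,x_0)\le\sum_j r_j$ by the first part, the additivity $\nu(T,x_0)=\sum_i\lambda_i\operatorname{mult}_{x_0}V_i+\nu(R,x_0)=a$ can hold only if every inequality is an equality. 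Thus each $V_i$ attains multiplicity equal to its total degree at $x_0$, which, in affine coordinates $w=(w^1,\dots,w^k)$ centred at $x_0$, means its defining polynomial is homogeneous of degree $d^i_j$ in each block $w^j$; equivalently $V_i$ is a coordinate cone with vertex $x_0$, i.e.\ the pullback of a hypersurface under the projection $X\dashrightarrow\prod_j\mathbb P^{n_j-1}$ from the points $x_0^j$. Simultaneously $\nu(R,x_0)=\sum_j r_j$ is maximal.

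It then remains to promote this cone structure at a single vertex $x_0$ to a cone over a linear subspace, and to control the diffuse part $R$. Here I would pass to the multi-homogeneous potential of $T$ lifted to $\prod_j(\mathbb C^{n_j+1}\setminus\{0\})$ and apply Proposition~\ref{P:proj} block by block: this should identify the maximal locus in each factor as a linear subspace $L_j=\Pi_j(\ker\eta_j\setminus\{0\})$ and exhibit $T$ as invariant under the induced projection $[\eta_j]$; assembling the factors would give $E_a(T)=L_1\times\dots\times L_k$ and $T=\wp^\star S$ with $\wp=[\eta_1]\times\dots\times[\eta_k]$, while $E_a(S)=\emptyset$ because $L_j$ already exhausts the maximal directions. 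The main obstacle is exactly this last reduction for the residual current $R$: the naive idea of slicing $T$ by the factor $\{x:\pi_i(x)=x_0^i\ \text{for}\ i\ne j\}$ and invoking the projective result fails at $x_0$, since that slice lies in the polar set of the potential (its restriction is $\equiv-\infty$ whenever $\nu(T,x_0)>a_j$). I therefore expect the delicate step to be replacing this forbidden restriction by an argument carried out through the multi-homogeneous potential, or through generic slices together with upper semicontinuity of Lelong numbers, in order to recover the linearity of each $L_j$ and the pullback $T=\wp^\star S$.
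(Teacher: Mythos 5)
Your proof of the first assertion, $\nu(T,x)\leq a$, is correct but follows a genuinely different route from the paper. You intersect $T$ with a generic rational curve $C$ of multidegree $(1,\ldots,1)$ through $x$ and compare the atomic mass of $T\wedge[C]$ at $x$ with the cohomological intersection $[T]\cdot[C]=a$; this works, provided you note (as you implicitly do by taking $C$ generic) that $C$ must avoid being contained in the polar set of the local potential so that the restricted measure is well defined. The paper instead works with the potential: $T\mid_{{\mathbb C}^n}=dd^cu$ with $u$ psh satisfying \eqref{e:Lp}, the trivial extension of $dd^cu$ to ${\mathbb P}^n$ has mass at most $a$, and Proposition \ref{P:proj}(i) applies (this is Proposition \ref{P:psh}). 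Your route is more elementary and self-contained; the paper's route is chosen because Proposition \ref{P:psh} also delivers the sharp equality-case estimate on which the entire structural half of Theorem \ref{T:t1} rests.

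For the structural half there is a genuine gap, which you yourself flag as the ``delicate step.'' Your Siu-decomposition analysis is sound as far as it goes: equality in $\nu(T,x_0)=a$ does force each divisorial component through $x_0$ to be a multi-cone with vertex $x_0$ and forces $\nu(R,x_0)=\sum_j r_j$ to be maximal. But this is information at a single point; it gives neither the linearity of $\pi_j(E_a(T))$, nor the product structure $E_a(T)=L_1\times\ldots\times L_k$, nor any structure for the diffuse part $R$ beyond its Lelong number, and hence not $T=\wp^\star S$. Note also that recursing on $R$ cannot close the argument: the statement ``maximal Lelong number implies pullback under a projection'' for a current with no divisorial part is precisely the theorem being proved. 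The missing idea is the equality case of Proposition \ref{P:psh}: if $\nu(u,0)=a$, then $u(\zeta^1,\ldots,\zeta^k)\leq\sum_{j=1}^ka_j\log|\zeta^j|+C$ on all of ${\mathbb C}^n$, with the \emph{same} constant $C$ as in \eqref{e:Lp} and with $\log$ rather than $\log^+$. This single estimate resolves exactly the polar-set obstruction you identify: it shows $\nu(u,x)\geq a_j$ at \emph{every} point of $\{\zeta^j=0\}$, so one never slices through the bad point $q$ at all. Instead, for a generic $x'$ in the remaining factors with $\varphi(\cdot,x')\not\equiv-\infty$, the slice current $a_j\omega_{FS}+dd^c\varphi(\cdot,x')$ attains the maximal Lelong number $a_j$ along $\pi_j(E_a(T))$, and Proposition \ref{P:proj}(ii)--(iii) then shows $E_{a_j}$ of each slice is a proper linear subspace and splits off $a_j\log\big(|\eta_j(z^j)|/|z^j|\big)$ from the sliced potential.

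The paper assembles these slice splittings by defining $L_j=\Span\pi_j(E_a(T))$ outright and running an induction over the factors $j=1,\ldots,k$, at stage $j$ replacing ${\mathbb P}^{n_j}$ by ${\mathbb P}^{n_j-\ell_j-1}$, until the potential takes the form $\varphi=\sum_{m=1}^ka_m\log\frac{|\eta_m(z^m)|}{|z^m|}+\psi_k\circ\wp$ with $\psi_k$ $\omega$-psh on $Y$. This formula yields everything at once: $E_a(T)\supset L_1\times\ldots\times L_k$ is read off from the explicit log terms (the opposite inclusion holds by definition of $L_j$), $T=\wp^\star S$ with $S=\omega+dd^c\psi_k$, and $E_a(S)=\emptyset$ since a point of $E_a(S)$ would pull back to a point of $E_a(T)$ outside $L_1\times\ldots\times L_k$. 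With this potential-theoretic mechanism in hand, your Siu-decomposition detour becomes unnecessary; without it, your sketch does not close.
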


Our next result extends \cite[Proposition 2.2]{CG09} to the case of multiprojective spaces (see also \cite[Theorem 1.1]{CT15} for the case of currents of arbitrary bidegree on projective space). 

\begin{Theorem}\label{T:t2}
If $T\in\mathcal T$, $\nu_j=a-\frac{a_j}{n_j+1}\,$ and $\nu_0=\max\{\nu_1,\ldots,\nu_k\}$, then the following hold:

(i) There exist proper linear subspaces $V_j\subset{\mathbb P}^{n_j}$ such that $\pi_j\big(E^+_{\nu_j}(T)\big)\subset V_j$, for $1\leq j\leq k$.

(ii) $E^+_{\nu_0}(T)\subset V_1\times\ldots\times V_k$.
\end{Theorem}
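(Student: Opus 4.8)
The plan is to deduce part (ii) from part (i) immediately, and to prove (i) by reducing each projection to the known case of a single projective space. For (ii), observe that since $\nu_0=\max_j\nu_j\geq\nu_j$ we have $E^+_{\nu_0}(T)\subseteq E^+_{\nu_j}(T)$, hence $\pi_j\big(E^+_{\nu_0}(T)\big)\subseteq\pi_j\big(E^+_{\nu_j}(T)\big)\subseteq V_j$ for every $j$ by (i); this is exactly the assertion $E^+_{\nu_0}(T)\subseteq V_1\times\ldots\times V_k$. So the whole content lies in (i). To prove (i) I fix $j$ and argue by contradiction: if $\pi_j\big(E^+_{\nu_j}(T)\big)$ lies in no hyperplane of $\mathbb{P}^{n_j}$, then it spans $\mathbb{P}^{n_j}$, so I may pick $x^{(0)},\ldots,x^{(n_j)}\in E^+_{\nu_j}(T)$ whose images $y_l:=\pi_j(x^{(l)})$ are a projective basis of $\mathbb{P}^{n_j}$. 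The point is to restrict $T$ to a single copy of $\mathbb{P}^{n_j}$ linearly embedded in $X$ through all the $x^{(l)}$, so that uniformity across the fibers of $\pi_j$ is built in: the resulting current lives on one fixed projective space.

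I construct this copy as a graph over the $j$-th factor. For each $i\neq j$ I choose a linear map inducing $[\sigma_i]:\mathbb{P}^{n_j}\dashrightarrow\mathbb{P}^{n_i}$ with $[\sigma_i](y_l)=\pi_i(x^{(l)})$ for all $l$ (possible because the $y_l$ form a basis), and set $\iota:\mathbb{P}^{n_j}\to X$, $\iota(y)=\big([\sigma_1](y),\ldots,y,\ldots,[\sigma_k](y)\big)$ with the identity in the $j$-th slot. Then $\pi_j\circ\iota=\mathrm{id}$, so $\iota$ is an injective immersion where defined and $\iota(y_l)=x^{(l)}$. Since each $[\sigma_i]$ pulls the hyperplane class back to the hyperplane class, $\iota^\star\omega_i\sim\omega_{FS}$ for every $i$, whence the pullback current $S:=\iota^\star T$ is positive, closed, of bidegree $(1,1)$ on $\mathbb{P}^{n_j}$ with $S\sim a\,\omega_{FS}$; the crucial gain is that $S$ collects the mass of all factors, so its total mass is $a$ rather than $a_j$. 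Because restriction to a submanifold does not decrease Lelong numbers, $\nu(S,y_l)\geq\nu(T,x^{(l)})>\nu_j$. Now $\nu_j=a-\frac{a_j}{n_j+1}\geq a-\frac{a}{n_j+1}=a\frac{n_j}{n_j+1}$, using only $a_j\leq a$, so every $y_l$ lies in $E^+_{a n_j/(n_j+1)}(S)$. By the single–projective–space result (Proposition \ref{P:proj}, equivalently \cite[Proposition 2.2]{CG09} applied on $\mathbb{P}^{n_j}$ in the class $a\,\omega_{FS}$) this set is contained in a hyperplane, contradicting the fact that $y_0,\ldots,y_{n_j}$ span $\mathbb{P}^{n_j}$. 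This contradiction yields (i), with $V_j$ any hyperplane containing $\pi_j\big(E^+_{\nu_j}(T)\big)$.

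The main obstacle is making the pullback $S=\iota^\star T$ legitimate: both the inequality $\nu(S,y_l)\geq\nu(T,x^{(l)})$ and the cohomological computation require that $P:=\iota(\mathbb{P}^{n_j})$ not be contained in the polar set $\{x:\nu(T,x)>0\}$ of $T$, even though $P$ is forced through the points $x^{(l)}$, which lie in that (pluripolar) set. I expect to handle this by genericity: writing $T=\omega+dd^c\varphi$ with $\varphi$ quasi-plurisubharmonic, I would exploit the freedom in the interpolating maps $[\sigma_i]$ (they are pinned only by the finitely many conditions $[\sigma_i](y_l)=\pi_i(x^{(l)})$, leaving a positive–dimensional family) to arrange $\varphi\circ\iota\not\equiv-\infty$, so that $S$ is a genuine current; one must also keep the base loci of the $[\sigma_i]$ (nonempty precisely when $n_i<n_j$) away from the $y_l$ and check that $\iota$ is an embedding near each $y_l$. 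Verifying that generic choices avoid the countably many analytic components of the polar set is the one delicate technical point; everything else is the elementary Lelong–number and cohomology bookkeeping carried out above.
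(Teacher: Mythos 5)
There is a genuine gap, and it sits exactly at the point you flag as delicate: the existence of the pullback $S=\iota^\star T$. The family of admissible graphs through $x^{(0)},\ldots,x^{(n_j)}$ is much more constrained than your genericity plan assumes. Since lifts $\tilde y_0,\ldots,\tilde y_{n_j}$ of the $y_l$ form a basis of ${\mathbb C}^{n_j+1}$, an interpolating linear map $\sigma_i$ is completely determined by the choices $\sigma_i(\tilde y_l)=c_l w_l$ with $w_l$ a lift of $\pi_i(x^{(l)})$; hence the image of every admissible $\sigma_i$ lies in the linear span of $w_0,\ldots,w_{n_j}$, and every admissible graph is contained in the set whose $i$-th coordinate lies in $\Span\{\pi_i(x^{(0)}),\ldots,\pi_i(x^{(n_j)})\}$ for each $i\neq j$. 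No amount of genericity within the family moves the graph off this set, and nothing in the hypotheses prevents it from lying inside the polar set of $T$. The degenerate case makes this vivid: if the chosen points all share their $i$-th coordinate, $\pi_i(x^{(l)})=b_i$ for every $l$, then each interpolating $\sigma_i$ has rank one, so $[\sigma_i]\equiv b_i$ off its indeterminacy locus and the graph is \emph{forced} to be the single fiber through $(b_1,\ldots,b_k)$; if, say, $T\geq a_i\,\pi_i^\star[H_i]$ with $b_i\in H_i$, then $\varphi\circ\iota\equiv-\infty$, $S$ is undefined, and your contradiction never gets off the ground. You cannot exclude such configurations a priori: the aligned-points situation is one the theorem must rule out, so it cannot be assumed away in the proof. (The rest of your bookkeeping is fine: $\nu_j\geq a\,n_j/(n_j+1)$ since $a_j\leq a$, the class computation $\iota^\star\omega_i\sim\omega_{FS}$ holds even for degenerate linear $\sigma_i$, and the Lelong inequality under restriction is correct \emph{where the restriction exists} — but that existence is precisely what fails.)

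The paper's proof avoids this obstruction by slicing rather than graphing, and the difference is instructive. It restricts $T$ to fibers ${\mathbb P}^{n_1}\times\{x'\}$ for $x'$ outside the pluripolar set $E=\{x':\varphi(\cdot,x')\equiv-\infty\}$ — slices that need not pass through any point of $E^+_{\nu_1}(T)$ — and then invokes the growth estimate of Theorem \ref{T:psh}$(i)$ (whose proof uses Siu decomposition along coordinate hyperplanes plus Demailly regularization) to \emph{propagate} the Lelong number: if $\nu=\nu(T,(p,q))>\nu_1\geq a-a_1$, the potential $u$ from \eqref{e:uphi} satisfies a global upper bound with leading term $(\nu+a_1-a)\log|\zeta^1|$, which forces $\nu(u(\cdot,x'),0)\geq\nu+a_1-a>n_1a_1/(n_1+1)$ on \emph{every} slice where $u(\cdot,x')\not\equiv-\infty$; then \cite[Proposition 2.2]{CG09} applies to each slice current $T_{x'}$ of mass $a_1$. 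This transfer of a Lelong number at $(p,q)$ to a Lelong bound at $p$ on slices avoiding $(p,q)$ is the mechanism your argument lacks; without an analogue of Theorem \ref{T:psh}, the graph construction cannot be repaired by genericity alone.
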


The following result is a version of \cite[Theorem 1.2]{CT15} for currents of bidegree $(1,1)$ on multiprojective spaces. If $A\subset{\mathbb P}^N$ we denote by $\Span A$ the smallest linear subspace of ${\mathbb P}^N$ containing $A$. 

\begin{Theorem}\label{T:t3}
Let $T\in\mathcal T$ and set $\nu_j=a-\frac{a_j}{n_j}\,$, $\nu_0=\max\{\nu_1,\ldots,\nu_k\}$. We have the following:

(i) If $n_j\geq2$ for some $1\leq j\leq k$, then the set $\pi_j\big(E^+_{\nu_j}(T)\big)$ is either contained in a hyperplane $H_j$ of ${\mathbb P}^{n_j}$, or else it is a finite set and $\pi_j\big(E^+_{\nu_j}(T)\big)\setminus L_j=A_j$, for some line $L_j$ and set $A_j$ with $|A_j|=n_j-1$ and $\Span(L_j\cup A_j)={\mathbb P}^{n_j}$.

(ii) If $n_j\geq2$ for all $1\leq j\leq k$, then $E^+_{\nu_0}(T)\subset W_1\times\ldots\times W_k$, where for each $j$, $W_j=H_j$ or $W_j=L_j\cup A_j$. Moreover, if $W_j=L_j\cup A_j$ for all $1\leq j\leq k$, then $E^+_{\nu_0}(T)$ is a finite set.
\end{Theorem}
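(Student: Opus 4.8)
\medskip
\noindent\textbf{Proof proposal.}

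The plan is to prove part (i) by reducing it to the single--factor case $k=1$ --- which is the bidegree $(1,1)$ instance of \cite[Theorem~1.2]{CT15} on $\mathbb P^{n_j}$ --- and then to deduce part (ii) formally from (i). Fix $j$ with $n_j\ge 2$. The crux is to produce one current $S_j\in\mathcal T_{a_j}(\mathbb P^{n_j})$, working simultaneously for all points of $E^+_{\nu_j}(T)$, that records the $j$-th component of the singularities of $T$. I would take the fibre integral
\[
S_j:=(\pi_j)_\star\Big(T\wedge\bigwedge_{i\ne j}\omega_i^{\,n_i}\Big),
\]
a positive closed $(1,1)$-current on $\mathbb P^{n_j}$. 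Since $\omega_i^{\,n_i+1}=0$ and $\int_{\mathbb P^{n_i}}\omega_{FS}^{\,n_i}=1$, its class is exactly $a_j\omega_{FS}$, so $S_j\in\mathcal T_{a_j}(\mathbb P^{n_j})$. The key estimate to establish is the \emph{transfer inequality} $\nu\big(S_j,\pi_j(x)\big)\ge \nu(T,x)-(a-a_j)$ for all $x\in X$.

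Granting this, if $x\in E^+_{\nu_j}(T)$ then $\nu(T,x)>\nu_j=a-\tfrac{a_j}{n_j}$, so $\nu(S_j,\pi_j(x))>a_j-\tfrac{a_j}{n_j}=a_j\big(1-\tfrac1{n_j}\big)$, whence $\pi_j\big(E^+_{\nu_j}(T)\big)\subseteq E:=E^+_{a_j(1-1/n_j)}(S_j)$. Applying the $k=1$ case \cite[Theorem~1.2]{CT15} to $S_j$, the set $E$ is either contained in a hyperplane $H_j$, or finite with $E\setminus L_j=A_j$, $|A_j|=n_j-1$ and $\Span(L_j\cup A_j)=\mathbb P^{n_j}$. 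In the first case $\pi_j(E^+_{\nu_j}(T))\subseteq H_j$. In the second case $\pi_j(E^+_{\nu_j}(T))$ is finite; if it is \emph{not} contained in a hyperplane it spans $\mathbb P^{n_j}$, and since a line together with $n_j-1$ further points can reach $\mathbb P^{n_j}$ only if every one of those points raises the span (a dimension count), each point of $A_j$ is indispensable, so the spanning subset $\pi_j(E^+_{\nu_j}(T))$ must contain all of $A_j$; thus $\pi_j(E^+_{\nu_j}(T))\setminus L_j=A_j$ with the required spanning property. This is exactly (i).

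Part (ii) is then formal. As $\nu_0=\max_i\nu_i\ge\nu_j$ we have $E^+_{\nu_0}(T)\subseteq E^+_{\nu_j}(T)$, so $\pi_j\big(E^+_{\nu_0}(T)\big)\subseteq\pi_j\big(E^+_{\nu_j}(T)\big)\subseteq W_j$ for each $j$, where $W_j=H_j$ or $W_j=L_j\cup A_j$ by (i). Since any subset of a product lies in the product of its projections,
\[
E^+_{\nu_0}(T)\subseteq\prod_{j=1}^k\pi_j\big(E^+_{\nu_0}(T)\big)\subseteq W_1\times\cdots\times W_k .
\]
If $W_j=L_j\cup A_j$ (a finite set) for every $j$, the right-hand side is finite, hence so is $E^+_{\nu_0}(T)$.

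The real work is the transfer inequality, and it is genuinely delicate because Lelong numbers do not add across factors: at a point $x$ lying in the singular loci of several factors, the slices of a local potential of $T$ through $x$ can collapse to $-\infty$, so the estimate cannot be read off by naively restricting to the fibre $\pi_j^{-1}(\pi_j(x))$. I would prove it by writing $S_j=dd^c v_j$ with $v_j$ the fibre average $(\pi_j)_\star\big(u\wedge\bigwedge_{i\ne j}\omega_i^{\,n_i}\big)$ of a local potential $u$ of $T$, estimating $\nu(v_j,\pi_j(x))$ from below via Fubini for the Lelong--Poincar\'e (trace) measure together with the semicontinuity of Lelong numbers, and bounding the defect between $\nu(v_j,\pi_j(x))$ and $\nu(T,x)$ by the mass carried by the complementary factors. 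That defect is at most $\sum_{i\ne j}a_i=a-a_j$ precisely because each factor contributes a Lelong number at most $a_i$, which is the $k=1$ case of Theorem~\ref{T:t1}; making this comparison quantitative is the heart of the argument.
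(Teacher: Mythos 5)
Your global architecture is sound and close in spirit to the paper's: reduce (i) to the one-factor statement \cite[Theorem 1.2]{CT15} for a mass-$a_j$ current on ${\mathbb P}^{n_j}$, then deduce (ii) formally via $E^+_{\nu_0}(T)\subseteq\prod_j\pi_j\big(E^+_{\nu_j}(T)\big)$; your dimension count in the finite case and the derivation of (ii) are correct and essentially identical to the paper's. The difference is the transfer device: the paper uses slice currents $T_{x'}=a_1\omega_{FS}+dd^c\varphi(\cdot,x')$ for a fixed $x'$ outside a pluripolar set, while you use the fiber average $S_j=(\pi_j)_\star\big(T\wedge\bigwedge_{i\neq j}\omega_i^{n_i}\big)$; your cohomology computation for $S_j$ is correct. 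But the entire content of the theorem is concentrated in your ``transfer inequality'' $\nu(S_j,\pi_j(x))\geq\nu(T,x)-(a-a_j)$, and this is where your proposal has a genuine gap: the justification you sketch does not prove it. Semicontinuity of Lelong numbers gives \emph{upper} bounds along families of slices, whereas you need a \emph{lower} bound on $\nu(\varphi(\cdot,y),\pi_j(x))$ for a.e.\ $y$ in the complementary factors; the only slice where a lower bound is free (the slice through $x$ itself, via the restriction inequality) has measure zero in your fiber average. Note that a large Lelong number of $T$ at the single point $x$ does not, by any soft argument, force the slices through \emph{other} points of the fiber $\pi_j^{-1}(\pi_j(x))$ to be singular at $\pi_j(x)$ --- yet this is exactly what the transfer inequality asserts, and it is special to the growth class \eqref{e:Lp}.

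Your proposed quantification --- ``the defect is at most $\sum_{i\neq j}a_i$ because each factor contributes a Lelong number at most $a_i$'' --- conflates an upper bound on factor contributions (Proposition \ref{P:proj}(i), applied slicewise) with the needed lower bound on the generic Lelong number in the fiber direction. In the paper this lower bound is Theorem \ref{T:psh}(i): if $\nu(u,0)=\nu\geq a-a_j$ for $u$ satisfying \eqref{e:Lp}, then $u(\zeta)\leq(\nu+a_j-a)\log|\zeta^j|+(a-\nu)\log^+|\zeta^j|+\sum_{\ell\neq j}a_\ell\log^+|\zeta^\ell|+C$, which immediately gives $\nu(u(\cdot,y),0)\geq\nu+a_j-a$ for \emph{every} admissible slice $y$ (hence for your average, by Fatou). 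Its proof is genuinely nontrivial: one applies Siu's decomposition \cite{Siu74} along $\{t_j=0\}$ to split off the generic Lelong number $\alpha_j$, and then --- the key step --- bounds $\alpha_j\geq\nu+a_j-a$ from below by regularizing the residual current via Demailly's theorem \cite{D92} so that it can be restricted to $\{t_j=0\}$, where Proposition \ref{P:proj} applies; a slicing argument reduces general $(n_1,\ldots,n_k)$ to the $({\mathbb P}^1)^k$ case. None of this is recovered by Fubini plus semicontinuity, so as written your proof of the crux step would not go through; with Theorem \ref{T:psh} supplied, however, both your averaged current $S_j$ and the paper's single slice $T_{x'}$ work, and the rest of your argument is correct.
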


One can also obtain a version of \cite[Theorem 1.1]{He19} in the case of multiprojective spaces. It strengthens the conclusion of Theorem \ref{T:t3} under the additional assumption on the existence of two points where $T$ has large Lelong number. 

\begin{Theorem}\label{T:t4}
Let $T\in\mathcal T$ and set $\nu_j=a-\frac{a_j}{n_j}\,$, $\nu_0=\max\{\nu_1,\ldots,\nu_k\}$.  Furthermore, let 
\[\beta_j=\beta_j(\alpha_j)=\frac{an_j^2-a_jn_j-\alpha_j}{n_j^2-1}\;,\,\text{ where } \nu_j<\alpha_j\leq a\,,\]
and set $\beta_0 = \max \{\beta_1, \ldots, \beta_k\}$, $\alpha_0 = \max\{\alpha_1, \ldots, \alpha_k\}$.
Let $p=(p_1,\ldots,p_k)\in X$, $q=(q_1,\ldots,q_k)\in X$. Then the following hold:

(i) If $n_j\geq2$, $p_j\neq q_j$, $\nu(T, p)\geq \alpha_j$, $\nu(T,q) \geq \alpha_j$, for some $1\leq j\leq k$, then the set $\pi_j\big(E^+_{\beta_j}(T)\big)$ is either contained in a hyperplane $H_j$ of ${\mathbb P}^{n_j}$, or else $\pi_j\big(E^+_{\beta_j}(T)\big)\setminus L_j=A_j$, for some line $L_j$ and set $A_j$ with $|A_j|=n_j-1$ and $\Span(L_j\cup A_j)={\mathbb P}^{n_j}$.

(ii) If $n_j\geq2$, $p_j\neq q_j$, for all $1\leq j \leq k$, and if $\nu(T, p)\geq \alpha_0$, $\nu(T,q) \geq \alpha_0$, then $E^+_{\beta_0}(T)\subset W_1\times\ldots\times W_k$, where for each $j$, $W_j=H_j$ or $W_j=L_j\cup A_j$. 
\end{Theorem}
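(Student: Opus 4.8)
The plan is to establish part (i) and then read off (ii) exactly as (ii) is deduced from (i) in Theorem \ref{T:t3}. Indeed, if $x\in E^+_{\beta_0}(T)$ then $\nu(T,x)>\beta_0\ge\beta_j$ for every $j$, while the hypotheses $p_j\neq q_j$ and $\nu(T,p),\nu(T,q)\ge\alpha_0\ge\alpha_j$ hold for all $1\le j\le k$; so applying (i) to each factor gives $\pi_j(x)\in\pi_j(E^+_{\beta_j}(T))\subset W_j$, whence $x\in W_1\times\ldots\times W_k$. Thus the entire content is part (i), which I would model on the single--factor statement \cite[Theorem 1.1]{He19} and transfer to $X$ by the slicing mechanism already used to prove Theorems \ref{T:t2} and \ref{T:t3}.

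For (i) I would first pass from $X$ to the single space $\mathbb P^{n_j}$. Writing $T=\omega+dd^c\varphi$ with $\varphi$ an $\omega$-plurisubharmonic potential, one has $\nu(T,\cdot)=\nu(\varphi,\cdot)$, and restricting $\varphi$ to a slice $\{z^i=c^i,\ i\neq j\}\cong\mathbb P^{n_j}$ with the transverse coordinates $c^i$ chosen generically yields a positive closed $(1,1)$-current $S_j$ on $\mathbb P^{n_j}$ of cohomology class $a_j\omega_{FS}$ (hence mass $a_j$), together with the comparison
\[
\nu\big(S_j,\pi_j(x)\big)\ \ge\ \nu(T,x)-(a-a_j)\qquad(x\in X).
\]
The point is that the Lelong number of $T$ can drop by at most the total transverse mass $a-a_j=\sum_{i\neq j}a_i$ as one moves from $x$ to a generic point over $\pi_j(x)$; this is the inequality that, at the threshold $\nu_j=a-a_j/n_j$, turns into the $\mathbb P^{n_j}$-threshold $a_j-a_j/n_j$ of \cite{CT15} for the mass-$a_j$ current $S_j$, and is precisely what drives Theorem \ref{T:t3}(i).

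With this reduction the hypotheses of (i) say that $S_j$ carries two points $p_j\neq q_j$ with $\nu(S_j,p_j),\nu(S_j,q_j)\ge\alpha_j-(a-a_j)$, and one checks $\alpha_j-(a-a_j)>a_j-a_j/n_j$ because $\alpha_j>\nu_j$. I would then apply \cite[Theorem 1.1]{He19} to $S_j$: for a mass-$m$ current on $\mathbb P^{n}$ admitting two points of Lelong number $\ge\alpha$, it produces the dichotomy ``contained in a hyperplane, or a line together with $n-1$ points spanning $\mathbb P^{n}$'' for the set $E^+_{\beta}$ at the sharpened threshold $\beta=\big(mn(n-1)-\alpha\big)/(n^2-1)$. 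Taking $m=a_j$, $n=n_j$ and $\alpha=\alpha_j-(a-a_j)$, and shifting the resulting threshold back up by $a-a_j$ through the comparison above, a direct computation gives
\[
\frac{a_jn_j(n_j-1)-\big(\alpha_j-(a-a_j)\big)}{n_j^2-1}+(a-a_j)\ =\ \frac{an_j^2-a_jn_j-\alpha_j}{n_j^2-1}\ =\ \beta_j .
\]
Since $\nu(S_j,\pi_j(x))>\beta_j-(a-a_j)$ whenever $x\in E^+_{\beta_j}(T)$, the set $\pi_j(E^+_{\beta_j}(T))$ lands inside the set for which \cite{He19} gives the structure, and (i) follows.

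The step I expect to be the main obstacle is the slicing comparison itself, where the bulk of the work lies. For currents such as $\sum_i a_i\pi_i^\star[H_i]$ the potential $\varphi$ restricts to $-\infty$ on the slice through a ``corner'' point $x$, so $S_j$ must be manufactured from \emph{generic} transverse slices rather than the slice through $x$, and one must verify both that the transverse drop is bounded by $a-a_j$ and that a single generic choice of the $c^i$ works simultaneously for all $x$ in the (analytic, by Siu's theorem) level sets and for the two anchor points $p,q$. Equivalently, one may run the argument on $X$ directly through generic multi-lines whose image in $\mathbb P^{n_j}$ is a prescribed line, so that the full Lelong number is seen while the transverse factors contribute at most $a-a_j$; either way, it is the careful bookkeeping of this $a-a_j$ against the $\mathbb P^{n_j}$-thresholds of \cite{CT15} and \cite{He19} that pins down the exact value of $\beta_j$.
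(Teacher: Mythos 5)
Your proposal is correct and follows essentially the same route as the paper's proof: slice the potential to get a mass-$a_j$ current $T_{x'}$ on ${\mathbb P}^{n_j}$, use the comparison $\nu(T_{x'},\pi_j(x))\geq\nu(T,x)-(a-a_j)$, apply \cite[Theorem 1.1]{He19} with the shifted parameters, and check the arithmetic $\beta_j+a_j-a=\gamma(n_j,a_j,\alpha_j+a_j-a)$, which you do correctly (one also needs $\beta_j\geq a-a_j$, which follows from $\alpha_j\leq a$, so that points of $E^+_{\beta_j}(T)$ meet the threshold hypothesis). The obstacle you single out is not a gap in context: Theorem \ref{T:psh}, already proved and used for Theorems \ref{T:t2} and \ref{T:t3}, gives the comparison for \emph{every} fixed $x'$ outside the pluripolar set $\{x':\varphi(\cdot,x')\equiv-\infty\}$ simultaneously for all $x$ (the global growth estimate forces a genuine logarithmic singularity along the whole fiber), so no genericity bookkeeping is required.
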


It is worth noting that $\beta_j(\nu_j) = \nu_j$, and that $\beta_j$ decreases as $\alpha_j$ increases. 

\medskip

The paper is organized as follows. In Section \ref{S:prelim} we recall results about the structure of positive closed currents of bidegree $(1,1)$ on multiprojective spaces. In Section \ref{S:psh} we develop some of the tools needed for the proof of our results; see Proposition \ref{P:psh} and Theorem \ref{T:psh}, which deal with growth properties of entire plurisubharmonic (psh) functions in certain Lelong classes on ${\mathbb C}^n$ that have a large Lelong number at the origin. Theorems \ref{T:t1}, \ref{T:t2}, \ref{T:t3}, and \ref{T:t4} are proved in Section \ref{S:proofs}. We also give there examples showing that these results are sharp. In Section \ref{S:fr} we consider positive closed currents of bidegree $(1,1)$ on a projective space, and we obtain in Theorem \ref{T:proj} a more precise version of \cite[Proposition 2.2]{CG09}.

\section{Preliminaries}\label{S:prelim}
Positive closed currents of bidegree $(1,1)$ on a projective space ${\mathbb P}^m$ can be described via their logarithmically homogeneous plurisubharmonic (psh) potentials on ${\mathbb C}^{m+1}$ or via psh functions in the Lelong class on ${\mathbb C}^m\hookrightarrow{\mathbb P}^m$ (see \cite{FS95d,Si99,GZ05}). Recall that the Lelong class ${\mathcal L}({\mathbb C}^m)$ is the class of psh functions $u$ on ${\mathbb C}^m$ that satisfy $u(z)\leq\log^+|z|+C_u$ for all $z\in{\mathbb C}^m$, with some constant $C_u$ depending on $u$. A similar description holds in the case of multiprojective spaces and we recall it in this section (see also \cite[Section 2]{FG01}). If $M$ is a complex manifold and $\Omega$ is a smooth real $(1,1)$-form on $M$, an $\Omega$-plurisubharmonic ($\Omega$-psh) function on $M$ is a function $\psi$ which is locally the sum of a psh function and a smooth one, and which verifies $\Omega+dd^c\psi\geq0$ in the sense of currents. Here $d=\partial+\overline\partial$, $d^c=\frac{1}{2\pi i}(\partial-\overline\partial)$. 

Let $X$ be the multiprojective space defined in \eqref{e:X} endowed with the K\"ahler form $\omega$ from \eqref{e:om}. Recall the definition \eqref{e:proj} of the projections $\Pi_j$ and $\pi_j$, and set 
\begin{equation}\label{e:Pi}
\Pi=\Pi_1\times\ldots\times\Pi_k:({\mathbb C}^{n_1+1}\setminus\{0\})\times\ldots\times({\mathbb C}^{n_k+1}\setminus\{0\})\to X\,.
\end{equation}
Consider the standard embeddings
\begin{equation}\label{e:emb}
\begin{split}
&{\mathbb C}^{n_j}\hookrightarrow{\mathbb P}^{n_j},\,\;\zeta^j=(\zeta^j_1,\ldots,\zeta^j_{n_j})\in{\mathbb C}^{n_j}\to[1:\zeta^j]:=[1:\zeta^j_1:\ldots:\zeta^j_{n_j}]\in{\mathbb P}^{n_j},\\
&{\mathbb C}^n={\mathbb C}^{n_1}\times\ldots\times{\mathbb C}^{n_k}\hookrightarrow X\,,\,\;(\zeta^1,\ldots,\zeta^k)\to([1:\zeta^1],\ldots,[1:\zeta^k])\,.
\end{split}
\end{equation}

Let $T\in{\mathcal T}$, where ${\mathcal T}$ is defined in \eqref{e:T}. Then 
 \begin{equation}\label{e:omp}
 T=\omega+dd^c\varphi\,,
 \end{equation}
 where $\varphi=\varphi_T$ is an $\omega$-psh function on $X$, unique up to additive constants. We define the function $U=U_T$ by 
 \begin{equation}\label{e:logp}
 U(z^1,\ldots,z^k)=\sum_{j=1}^ka_j\log|z^j|+\varphi([z^1],\ldots,[z^k])\,.
 \end{equation}
 Then $U$ extends to a psh function on ${\mathbb C}^{n_1+1}\times\ldots\times{\mathbb C}^{n_k+1}$, $\Pi^\star T=dd^c U$, and $U$ satisfies the logarithmic homogeneity condition 
\begin{equation}\label{e:logh}
U(t_1z^1,\ldots,t_kz^k)=\sum_{j=1}^ka_j\log|t_j|+U(z^1,\ldots,z^k),\,\;\forall\,t_j\in{\mathbb C}\setminus\{0\},\;1\leq j\leq k\,.
\end{equation}

Set $\;u=u_T\in PSH({\mathbb C}^n)$, $u(\zeta^1,\ldots,\zeta^k)=U(1,\zeta^1,\ldots,1,\zeta^k)$. Then it is easy to see that $T\mid_{{\mathbb C}^n}=dd^cu$ and that the function $u\in a{\mathcal L}({\mathbb C}^n)$ (where $a$ is defined in \eqref{e:om}) satisfies the special growth condition 
\begin{equation}\label{e:Lp}
u(\zeta^1,\ldots,\zeta^k)\leq\sum_{j=1}^ka_j\log^+|\zeta^j|+C\,,\,\text{ for some constant $C$.}
\end{equation}

Conversely, if $u\in PSH({\mathbb C}^n)$ satisfies \eqref{e:Lp} then the function 
\begin{equation}\label{e:logL}
U(t_1,\zeta^1,\ldots,t_k,\zeta^k)=\sum_{j=1}^ka_j\log|t_j|+u(\zeta^1/t_1,\ldots,\zeta^k/t_k)\,,\,\;t_j\in\mathbb C\setminus\{0\}\,,\,\;\zeta^j\in{\mathbb C}^{n_j},
\end{equation}
extends to a psh function on ${\mathbb C}^{n_1+1}\times\ldots\times{\mathbb C}^{n_k+1}$ which satisfies \eqref{e:logh}. Thus $u$ determines a current in $\mathcal T$.  

\smallskip

We will need the following result which is contained in \cite[Propositions 2.1 and 2.3]{CG09}. It gives a description of positive closed currents of bidegree $(1,1)$ on ${\mathbb P}^m$ with highest Lelong number. Recall that if $T$ is a positive closed current of bidegree $(1,1)$ on ${\mathbb P}^m$ its mass is given by 
\[\|T\|=\int_{{\mathbb P}^m}T\wedge\omega_{FS}^{m-1}.\]
Moreover, if $\|T\|=1$ then $T=\omega_{FS}+dd^c\varphi$ for some $\omega_{FS}$-psh function $\varphi$ on ${\mathbb P}^m$. Let $\Pi:{\mathbb C}^{m+1}\setminus\{0\}\to{\mathbb P}^m$ be the canonical projection, set $z=(z_0,\ldots,z_m)\in{\mathbb C}^{m+1}$, $\Pi(z):=[z]=[z_0:\ldots:z_m]\in{\mathbb P}^m$.

\begin{Proposition}\label{P:proj}
Let $T$ be a positive closed current of bidegree $(1,1)$ on ${\mathbb P}^m$ with $\|T\|=1$. 

(i) We have $\nu(T,x)\leq 1$ for all $x\in{\mathbb P}^m$. 

(ii) If $E_1(T)\neq\emptyset$ then $E_1(T)$ is a proper linear subspace of ${\mathbb P}^m$.

(iii) Let $L\subset E_1(T)$ be a linear subspace of dimension $\ell$ and $\eta:{\mathbb C}^{m+1}\to{\mathbb C}^{m-\ell}$ be a surjective linear map such that $L=\Pi(\ker\eta\setminus\{0\})$. Then 
\[T=\omega_{FS}+dd^c\left(\log\frac{|\eta(z)|}{|z|}+h([\eta(z)])\right),\]
where $h$ is an $\omega_{FS}$-psh function on ${\mathbb P}^{m-\ell-1}$, $[\eta(z)]=[\eta]([z])$, and $[\eta]:{\mathbb P}^m\dashrightarrow{\mathbb P}^{m-\ell-1}$ is the projection induced by $\eta$. 

(iv) If $\dim E_1(T)=\ell$ and $\eta:{\mathbb C}^{m+1}\to{\mathbb C}^{m-\ell}$ is a surjective linear map such that $E_1(T)=\Pi(\ker\eta\setminus\{0\})$, then $T=[\eta]^\star S$, where $S$ is a positive closed current of bidegree $(1,1)$ on ${\mathbb P}^{m-\ell-1}$ with  $\|S\|=1$ and $E_1(S)=\emptyset$.
\end{Proposition}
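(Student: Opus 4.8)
The plan is to pass to the homogeneous potential and reduce the whole statement to one rigidity phenomenon: maximality of the Lelong number forces the current to be a pullback under a linear projection. Write $T=\omega_{FS}+dd^c\varphi$ with homogeneous potential $U(z)=\log|z|+\varphi([z])$ on $\mathbb C^{m+1}$, so that $\Pi^\star T=dd^cU$, $U(tz)=\log|t|+U(z)$, and $U(z)\le\log|z|+C$ because $\varphi$ is bounded above on the compact $\mathbb P^m$; recall also $\nu(T,[w])=\nu(U,w)$. Part (i) I would get from convexity of spherical means: for fixed $w$ let $\lambda(t)$ be the mean of $U$ over $\{|z-w|=e^t\}$, which is convex and nondecreasing in $t$, with slope tending to $\nu(U,w)$ as $t\to-\infty$ and slope $\le1$ as $t\to+\infty$ (from $U(z)\le\log|z|+C$). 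Monotonicity of the slope then gives $\nu(U,w)\le1$, which is (i); equality would force the slope to be identically $1$, signalling the rigidity exploited below.

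The heart of the matter is the single-point factorization, i.e. the case $\ell=0$ of (iii), which I would prove geometrically. Fix $p\in E_1(T)$ and let $\sigma:\widehat X\to\mathbb P^m$ be the blow-up of $p$, with exceptional divisor $E\cong\mathbb P^{m-1}$ and induced $\mathbb P^1$-bundle $\rho:\widehat X\to\mathbb P^{m-1}$ resolving the linear projection $[\eta_p]:\mathbb P^m\dashrightarrow\mathbb P^{m-1}$ from $p$. Since $\nu(T,p)=1$, the pullback decomposes as $\sigma^\star T=[E]+T'$, where $T'\ge0$ is closed and carries no mass on $E$. For a generic line $\ell$ through $p$ the restriction $T|_\ell$ is a positive measure on $\ell\cong\mathbb P^1$ of total mass $[T]\cdot[\ell]=1$ with an atom at $p$ of mass $\ge\nu(T,p)=1$; hence $T|_\ell=\delta_p$, so $T$ has no mass on $\ell\setminus\{p\}$ and the strict transform $T'$ has zero mass on the generic fibre of $\rho$. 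I then invoke the structural lemma: a positive closed $(1,1)$-current on a $\mathbb P^1$-bundle with zero mass on a generic fibre is a pullback from the base. Indeed its class must be $\rho^\star$ of a base class (the $[E]$-coefficient equals the fibre mass, hence $0$), and writing $T'=\rho^\star\alpha+dd^c\psi$ the restriction to each fibre is $dd^c(\psi|_{\mathrm{fibre}})\ge0$ of mass $0$, so $\psi$ is harmonic, hence constant, on each compact fibre; thus $\psi$ descends and $T'=\rho^\star S_p$. Pushing down by $\sigma$ and using $\sigma_\star[E]=0$ and $[\eta_p]\circ\sigma=\rho$ gives $T=[\eta_p]^\star S_p$, with $\|S_p\|=\|T\|=1$ since $[\eta_p]^\star$ preserves the hyperplane class.

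With this engine the remaining parts are formal. For (ii) I argue by induction on $m$: if $p\in E_1(T)$ then $T=[\eta_p]^\star S_p$ with $[\eta_p]$ a submersion off $p$, so $\nu(T,x)=\nu(S_p,[\eta_p](x))$ for $x\ne p$, whence $E_1(T)$ is the cone with vertex $p$ over $E_1(S_p)\subset\mathbb P^{m-1}$; by induction $E_1(S_p)$ is a proper linear subspace or empty, and such a cone is a proper linear subspace of $\mathbb P^m$. Part (iii) for general $\ell$ follows by iterating the engine $\ell+1$ times, projecting successively from points of $L$: after contracting $L=\Pi(\ker\eta\setminus\{0\})$ one lands on $\mathbb P^{m-\ell-1}$ with a mass-$1$ current $S$ and $T=[\eta]^\star S$, which on potentials reads exactly $\omega_{FS}+dd^c\big(\log\frac{|\eta(z)|}{|z|}+h([\eta(z)])\big)$ with $S=\omega_{FS}+dd^ch$. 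Finally (iv) is the case $L=E_1(T)$ of (iii), legitimate since $E_1(T)$ is linear of dimension $\ell$ by (ii); and $E_1(S)=\emptyset$, for otherwise a point $q\in E_1(S)$ would make $[\eta]^{-1}(q)$ a linear subspace strictly larger than $L$ contained in $E_1(T)$, contradicting $\dim E_1(T)=\ell$.

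The step I expect to require the most care is the single-point factorization. Concretely: justifying that $T|_\ell$ is defined with mass $1$ and Lelong number $\ge\nu(T,p)$ for generic $\ell$ (slicing theory of positive closed currents), the local structure $\sigma^\star T=[E]+T'$ of the pullback under the blow-up, and the ``zero fibre mass implies pullback'' lemma, together with the well-definedness of the pullback and pushforward of positive closed currents under the meromorphic projection $[\eta_p]$ and the modification $\sigma$. These are all standard in the theory but are where the real content lies; the convexity estimate for (i) and the inductive bookkeeping for (ii)--(iv) are routine by comparison.
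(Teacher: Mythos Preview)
Your argument is correct and takes a genuinely different route from the paper. The paper treats this proposition essentially as a restatement of results from \cite{CG09}: parts (i), (ii), (iv) are cited directly, and (iii) is deduced from \cite[Proposition~2.3]{CG09} after a linear change of coordinates sending $L$ to a coordinate subspace. The underlying argument in \cite{CG09} is potential-theoretic: one works with the psh potential $u\in\mathcal L(\mathbb C^m)$, and the equality $\nu(u,0)=1$ forces the radial maximum $\sup_{|\zeta|=r}u(\zeta)$ to have slope identically $1$ in $\log r$, from which the formula $u(\zeta)=\log|\zeta|+h([\zeta])$ is read off directly. This is precisely the ``slope rigidity'' you allude to after (i) but then do not use.

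Your single-point factorization replaces this analytic rigidity by a geometric one: blow up the point, compute that the cohomology class of the proper transform $T'$ is $\rho^\star$ of a base class, and then observe that a $(\rho^\star\alpha)$-psh function is fiberwise genuinely subharmonic (since $(\rho^\star\alpha)|_F=0$), hence constant on each compact fiber $F\cong\mathbb P^1$. This is more conceptual and makes the inductive structure of (ii)--(iv) transparent, since $T=[\eta_p]^\star S_p$ immediately exhibits $E_1(T)$ as a cone over $E_1(S_p)$. The potential-theoretic route, by contrast, is lighter on machinery---no blow-ups, no slicing---and delivers the explicit formula for the potential in one stroke, which is exactly what the paper exploits downstream in Proposition~\ref{P:psh} and Theorem~\ref{T:psh}. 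Your self-assessment of the delicate points is accurate: the slicing inequality $\nu(T|_\ell,p)\ge\nu(T,p)$ for generic $\ell$, the decomposition $\sigma^\star T=\nu(T,p)[E]+T'$ with $T'\ge0$, and the descent of $\psi$ across the pluripolar set of fibers on which $\psi\equiv-\infty$ are the substantive steps; all are standard, but they carry the real content of your approach.
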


\begin{proof} We give a sketch of the proof for the convenience of the reader. Assertion $(i)$ is contained in \cite[Proposition 2.1]{CG09}, while $(ii)$, $(iv)$ in \cite[Proposition 2.3]{CG09}. For $(iii)$, write $T=\omega_{FS}+dd^c\varphi$, where $\varphi$ is an $\omega_{FS}$-psh function on ${\mathbb P}^{m}$. Since $\eta$ is linear and surjective, there exists a linear isomorphism $A:{\mathbb C}^{m+1}\to{\mathbb C}^{m+1}$ such that $\eta\circ A(t)=(t_{\ell+1},\ldots,t_m)$, where $t=(t_0,\ldots,t_m)$. Hence $A(V)=\ker\eta$, where $V=\{t_{\ell+1}=\ldots=t_m=0\}$. Let $[A(t)]=[A]([t])$, where $[A]:{\mathbb P}^m\dashrightarrow{\mathbb P}^m$ is the automorphism of ${\mathbb P}^m$ induced by $A$. If 
\[S=[A]^\star T=\omega_{FS}+dd^c\left(\log\frac{|A(t)|}{|t|}+\varphi([A(t)])\right),\]
then $\Pi(V\setminus\{0\})\subset E_1(S)$. By Proposition 2.3 in \cite{CG09} and its proof we infer that 
\[\log\frac{|A(t)|}{|t|}+\varphi([A(t)])=\log\frac{|t_{\ell+1}|^2+\ldots+|t_m|^2}{|t_0|^2+\ldots+|t_m|^2}+h([t_{\ell+1}:\ldots:t_m])\,,\]
for some $\omega_{FS}$-psh function $h$ on ${\mathbb P}^{m-\ell-1}$. Thus 
\[\begin{split}
T&=[A^{-1}]^\star S=[A^{-1}]^\star\omega_{FS}+dd^c\left(\log\frac{|\eta(z)|}{|A^{-1}(z)|}+h([\eta(z)])\right)\\
&=\omega_{FS}+dd^c\left(\log\frac{|\eta(z)|}{|z|}+h([\eta(z)])\right).
\end{split}\]
\end{proof}

\section{Plurisubharmonic functions in special Lelong classes}\label{S:psh}
We study here entire psh functions that satisfy certain growth conditions. The results will be used in the proofs of our main theorems. The first proposition deals with the case of psh functions in certain Lelong classes, with the largest possible Lelong number at the origin.

\begin{Proposition}\label{P:psh}
Let $u$ be a psh function on ${\mathbb C}^n={\mathbb C}^{n_1}\times\ldots\times{\mathbb C}^{n_k}$ verifying \eqref{e:Lp}. Then $\nu(u,0)\leq a=a_1+\ldots+a_k$. Moreover, if $\nu(u,0)=a$ then 
\[u(\zeta^1,\ldots,\zeta^k)\leq\sum_{j=1}^ka_j\log|\zeta^j|+C \;\text{ on ${\mathbb C}^n$}\,,\]
with the same constant $C$ as in \eqref{e:Lp}. In particular, $\nu(u,x)\geq a_j$ for all $x\in\{\zeta^j=0\}$. 
\end{Proposition}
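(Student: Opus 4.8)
The plan is to reduce the whole statement to one-variable potential theory by restricting $u$ to complex lines through the origin. The bound $\nu(u,0)\le a$ is the easy part: since \eqref{e:Lp} gives $u(\zeta)\le a\log^+|\zeta|+C$ (because $\log^+|\zeta^j|\le\log^+|\zeta|$ and $\sum_j a_j=a$), the function $u$ lies in the Lelong class $a\,{\mathcal L}(\mathbb C^n)$. Concretely, the spherical mean $M(r)=\operatorname{mean}_{|\zeta|=r}u$ is convex and nondecreasing in $\log r$, its slope in $\log r$ is everywhere at most $a$ by the growth bound, and $\nu(u,0)$ equals the infimum of these slopes; hence $\nu(u,0)\le a$.

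For the equality case $\nu(u,0)=a$, the heart of the argument is a rigidity statement on lines. Fix a unit vector $e=(e^1,\dots,e^k)$ with every block $e^j\ne0$ and set $g_e(\lambda)=u(\lambda e)$, a subharmonic function on $\mathbb C$ that is $\not\equiv-\infty$ for generic such $e$. Its Riesz mass on the disc $\overline D(0,r)$ is nondecreasing in $r$, tends to $\nu(g_e,0)$ as $r\to0$, and tends to the degree $d_e=\limsup_{|\lambda|\to\infty}g_e(\lambda)/\log|\lambda|$ as $r\to\infty$. The growth \eqref{e:Lp} forces $d_e\le a$, while restriction to a line cannot decrease the Lelong number, so $\nu(g_e,0)\ge\nu(u,0)=a$. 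Squeezing, $\nu(g_e,0)=d_e=a$, so the Riesz measure of $g_e$ is exactly $a\,\delta_0$ and $g_e(\lambda)=a\log|\lambda|+h_e(\lambda)$ with $h_e$ harmonic on $\mathbb C$. Since $h_e$ is continuous on the unit disc and satisfies $h_e(\lambda)\le\sum_j a_j\log^+|e^j|+C$ for $|\lambda|\ge1$ (using $\log^+(xy)\le\log^+x+\log^+y$), it is bounded above, hence constant $c_e$ by Liouville; letting $|\lambda|\to\infty$ in \eqref{e:Lp}, where all $|\lambda e^j|\ge1$, gives the sharp bound $c_e\le\sum_j a_j\log|e^j|+C$.

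It remains to assemble the pointwise estimate with the \emph{same} constant $C$. For $\zeta$ with all blocks nonzero, write $\zeta=\lambda e$ with $|e|=1$; then, using $\log|\zeta^j|=\log|\lambda|+\log|e^j|$,
\[
u(\zeta)=a\log|\lambda|+c_e\le a\log|\lambda|+\sum_{j=1}^k a_j\log|e^j|+C=\sum_{j=1}^k a_j\log|\zeta^j|+C,
\]
which is exactly the asserted inequality. For $\zeta$ with some $\zeta^{j_0}=0$, the line through $0$ and $\zeta$ has $e^{j_0}=0$, so its degree is at most $a-a_{j_0}<a$; since the Lelong number of the restriction is still $\ge a>d_e$, that restriction must be $\equiv-\infty$, whence $u(\zeta)=-\infty$ and the bound holds trivially there as well. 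Finally, for $x\in\{\zeta^j=0\}$ the inequality localizes to $u(\zeta)\le a_j\log|\zeta^j|+C'$ near $x$, the remaining terms being bounded above; dividing by $\log|\zeta-x|$ and using $|\zeta^j|\le|\zeta-x|$ yields $\nu(u,x)\ge a_j$.

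The step I expect to be the main obstacle is the rigidity in the second paragraph: verifying that the Riesz measure collapses to $a\,\delta_0$ for \emph{every} admissible direction and extracting the harmonic remainder as a constant with the sharp value $C$, together with the delicate treatment of the coordinate subspaces $\{\zeta^j=0\}$, where one must actually prove $u\equiv-\infty$ (via the degree-versus-Lelong-number contradiction on degenerate lines) rather than merely estimate $u$ from above.
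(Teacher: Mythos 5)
Your proposal is correct, but it takes a genuinely different route from the paper's. The paper deduces both parts from the structure theory of \cite{CG09}: the bound $\nu(u,0)\leq a$ comes from extending $dd^cu$ trivially to ${\mathbb P}^n$ and quoting Proposition \ref{P:proj}, and in the equality case \cite[Proposition 2.1]{CG09} is invoked as a black box to give the global representation $u=\frac{a}{2}\log(|\zeta^1|^2+\ldots+|\zeta^k|^2)+h([\zeta^1:\ldots:\zeta^k])$ with $h$ an $a\omega_{FS}$-psh function on ${\mathbb P}^{n-1}$; the sharp bound with the same $C$ then follows from a scaling estimate on $h$ (choose $t$ with $|t\zeta^j|\geq1$ for all $j$) which is the exact same computation as your bound $c_e\leq\sum_j a_j\log|e^j|+C$. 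You instead re-prove the needed rigidity by hand: slicing along complex lines through $0$ and squeezing the Riesz mass of $g_e$ between $\nu(g_e,0)\geq\nu(u,0)=a$ and the degree $d_e\leq a$, so that the Riesz measure collapses to $a\delta_0$ and $g_e=a\log|\lambda|+c_e$. This is sound --- the ingredients (monotonicity of $r\mapsto n(r)$, the fact that restriction to a line does not decrease the Lelong number when the restriction is $\not\equiv-\infty$, and that the convex function $\log r\mapsto M(r)$ has slopes at most $a$) are all standard --- and it buys two things: the argument is self-contained, needing neither \cite{CG09} nor any regularization, and it handles the degenerate locus $\{\zeta^{j_0}=0\}$ more carefully than the paper does, since you actually prove $u\equiv-\infty$ there via the degree-versus-Lelong-number contradiction on degenerate lines, a point the paper's closing ``This yields the conclusion'' glosses over (extending the estimate to $\{\zeta^{j_0}=0\}$ requires the sub-mean value property, as upper semicontinuity alone gives the inequality in the wrong direction). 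One trivial loose end in your assembly step: for $\zeta$ with all blocks nonzero you implicitly assume $g_e\not\equiv-\infty$; if instead $g_e\equiv-\infty$ then $u(\zeta)=-\infty$ and the bound is vacuous, so this costs only one clause.
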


\begin{proof}
By \eqref{e:Lp} we have that $u\in a{\mathcal L}({\mathbb C}^n)$. Let $R$ be the trivial extension to ${\mathbb P}^n$ of the current $dd^cu$. Then $\|R\|\leq a$, so $\nu(u,0)\leq a$ by Proposition \ref{P:proj}. If $\nu(u,0)=a$ then \cite[Proposition 2.1]{CG09} implies that 
\[u(\zeta^1,\ldots,\zeta^k)=\frac{a}{2}\,\log(|\zeta^1|^2+\ldots+|\zeta^k|^2)+h([\zeta^1:\ldots:\zeta^k])\,,\]
for some $a\omega_{FS}$-psh function $h$ on ${\mathbb P}^{n-1}$. If $\zeta^j\neq0$ for all $1\leq j\leq k$ and $t\in\mathbb C$ is such that $|t\zeta^j|\geq1$ for all $1\leq j\leq k$, we obtain using \eqref{e:Lp} that
\begin{equation*}
\begin{split}
h([\zeta^1:\ldots:\zeta^k])
&=h([t\zeta^1:\ldots:t\zeta^k])=u(t\zeta^1,\ldots,t\zeta^k)-\frac{a}{2}\,\log(|t\zeta^1|^2+\ldots+|t\zeta^k|^2)\\
&\leq\sum_{j=1}^ka_j\log|t\zeta^j|+C-\frac{a}{2}\,\log(|t\zeta^1|^2+\ldots+|t\zeta^k|^2)\\
&=\sum_{j=1}^ka_j\log|\zeta^j|+C-\frac{a}{2}\,\log(|\zeta^1|^2+\ldots+|\zeta^k|^2)\,.
\end{split}
\end{equation*}
This yields the conclusion.
\end{proof}

Our next result is a refinement of Proposition \ref{P:psh} and it deals with the case of psh functions $u$ that have a sufficiently large Lelong number at the origin. 

\begin{Theorem}\label{T:psh}
Let $u$ be a psh function on ${\mathbb C}^n={\mathbb C}^{n_1}\times\ldots\times{\mathbb C}^{n_k}$ verifying \eqref{e:Lp} and set $\nu:=\nu(u,0)$. 

(i) If $\nu\geq a-a_j$, for some $1\leq j\leq k$, then 
\[u(\zeta^1,\ldots,\zeta^k)\leq(\nu+a_j-a)\log|\zeta^j|+ (a-\nu)\log^+|\zeta^j|+\sum_{\ell=1,\ell\neq j}^ka_\ell\log^+|\zeta^\ell|+C \;\text{ on ${\mathbb C}^n$}\,,\]
with the same constant $C$ as in \eqref{e:Lp}.

(ii) If $\nu\geq\max\{a-a_1,\ldots,a-a_k\}$ then 
\[u(\zeta^1,\ldots,\zeta^k)\leq\sum_{j=1}^k(\nu+a_j-a)\log|\zeta^j|+(a-\nu)\sum_{j=1}^k\log^+|\zeta^j|+C \;\text{ on ${\mathbb C}^n$}\,,\]
with the same constant $C$ as in \eqref{e:Lp}
\end{Theorem}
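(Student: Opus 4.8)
The plan is to reduce both statements to a problem in finite-dimensional convex analysis by passing to the multiradial maximal function of $u$. For $t=(t_1,\dots,t_k)\in\R^k$ I set
\[
\Lambda(t)=\sup\{u(\zeta^1,\dots,\zeta^k):|\zeta^j|=e^{t_j},\ 1\le j\le k\}.
\]
First I would record three structural properties of $\Lambda$. By the maximum principle and the classical convexity theory of psh functions (see \cite{Ho}), $\Lambda$ is convex on $\R^k$ and nondecreasing in each variable, and the $\sup$ over the polycircle equals the $\sup$ over the closed polydisc. The growth hypothesis \eqref{e:Lp} translates into $\Lambda(t)\le\sum_j a_jt_j^++C$ with the same constant $C$, so in particular $\Lambda(0)\le C$. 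Finally, comparing the polydisc $\{|\zeta^j|\le e^{-s}\}$ with the balls $B(0,e^{-s})$ and $B(0,\sqrt k\,e^{-s})$ and using the definition of the Lelong number identifies the diagonal recession slope, namely $\lim_{s\to\infty}\Lambda(-s,\dots,-s)/s=-\nu$.

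Next I would analyze the recession function $\Lambda_\infty(v)=\lim_{s\to\infty}\bigl(\Lambda(t+sv)-\Lambda(t)\bigr)/s$, which is convex and positively homogeneous, hence subadditive and independent of the base point. The growth bound gives $\Lambda_\infty(e_\ell)\le a_\ell$, and the previous step gives $\Lambda_\infty(-\mathbf 1)=-\nu$, where $\mathbf 1=(1,\dots,1)$. Writing $-e_j=-\mathbf 1+\sum_{\ell\ne j}e_\ell$ and invoking subadditivity yields
\[
\Lambda_\infty(-e_j)\le\Lambda_\infty(-\mathbf 1)+\sum_{\ell\ne j}\Lambda_\infty(e_\ell)\le -\nu+(a-a_j).
\]
Equivalently, setting $\psi_j:=\nu+a_j-a$, the smallest slope of $\Lambda$ in the $j$-th direction, $\mu_j:=-\Lambda_\infty(-e_j)=\lim_{\tau\to-\infty}\partial_j\Lambda(t+\tau e_j)$, satisfies $\mu_j\ge\psi_j$; together with monotonicity and the asymptotic bound $\partial_j\Lambda\le\Lambda_\infty(e_j)\le a_j$ coming from \eqref{e:Lp}, this gives the two-sided slope estimate $\psi_j\le\partial_j\Lambda\le a_j$ at every point. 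This subadditivity step is the heart of the argument: it is precisely what converts the single isotropic Lelong hypothesis at the origin into the $k$ separate per-factor estimates, and it is where the hypotheses $\nu\ge a-a_j$ enter, to guarantee $\psi_j\ge0$.

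To conclude I would integrate these slope bounds along a monotone path. Fix a target $t^\ast$ and set $c_\ell=a_\ell$ whenever $t^\ast_\ell>0$; for part (ii) set $c_\ell=\psi_\ell$ when $t^\ast_\ell\le0$, while for part (i) set $c_j=\psi_j$ for the distinguished index and $c_\ell=0$ for the remaining indices with $t^\ast_\ell\le0$ (using only $\mu_\ell\ge0$ there, so that no extra hypothesis is needed on those factors). Along the segment $\tau\mapsto\tau t^\ast$, the convex function $h(\tau):=\Lambda(\tau t^\ast)-\sum_\ell c_\ell\,\tau t^\ast_\ell$ has $h'(\tau)=\sum_\ell(\partial_\ell\Lambda(\tau t^\ast)-c_\ell)t^\ast_\ell\le0$, by the sign bookkeeping $(\partial_\ell\Lambda-c_\ell)t^\ast_\ell\le0$ (from $\partial_\ell\Lambda\le a_\ell$ where $t^\ast_\ell>0$ and $\partial_\ell\Lambda\ge\mu_\ell\ge c_\ell$ where $t^\ast_\ell\le0$). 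Hence $\Lambda(t^\ast)\le\sum_\ell c_\ell t^\ast_\ell+\Lambda(0)\le\sum_\ell c_\ell t^\ast_\ell+C$. Since $\sum_\ell c_\ell t^\ast_\ell$ is exactly the right-hand side of (i), respectively (ii), evaluated at $t^\ast_\ell=\log|\zeta^\ell|$, and since $u(\zeta)\le\Lambda(t^\ast)$, this is the asserted bound with the same constant $C$. The main obstacle I anticipate is the rigorous justification of the joint convexity of $\Lambda$ and the base-point independence of the slopes $\mu_j$ (equivalently, that $\Lambda_\infty$ is well defined): this rests on the fact that the $\prod_j U(n_j)$-invariant maximal majorant of a psh function is again psh, a standard but slightly delicate point, after which everything else is elementary convex analysis.
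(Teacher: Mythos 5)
Your proof is correct, and it takes a genuinely different route from the paper's. The paper's argument slices down to one-dimensional factors, peels off the generic Lelong number $\alpha_j$ of the sliced function along $\{t_j=0\}$ via Siu's decomposition theorem, and proves the crucial lower bound $\alpha_j\geq\nu+a_j-a$ by Demailly's regularization theorem: the regularized current is smooth at the generic point of $\{t_j=0\}$, so it can be restricted there and Proposition \ref{P:proj} applied; the stated inequalities then follow from the maximum principle. You encode everything in the multiradial function $\Lambda$ instead, and your subadditivity step $\Lambda_\infty(-e_j)\leq\Lambda_\infty(-\mathbf 1)+\sum_{\ell\neq j}\Lambda_\infty(e_\ell)$ produces the exact convex-analytic counterpart $\mu_j\geq\nu+a_j-a$ of the paper's key estimate, using only $\Lambda_\infty(-\mathbf 1)=-\nu$ (your polyball sandwich, together with the standard fact that $\nu(u,0)$ is the asymptotic slope of $\sup_{B(0,r)}u$ in $\log r$) and $\Lambda_\infty(e_\ell)\leq a_\ell$ from \eqref{e:Lp}. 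This buys elementarity: no Siu decomposition, no regularization (a deep theorem), no slicing, and parts (i) and (ii) come out of a single bookkeeping scheme; indeed your path integration can be compressed further, since every subgradient $g\in\partial\Lambda(t)$ satisfies $\max\{0,\nu+a_\ell-a\}\leq g_\ell\leq a_\ell$, whence $\Lambda(t^\ast)\leq\Lambda(0)+\langle g,t^\ast\rangle\leq C+\sum_\ell c_\ell t^\ast_\ell$ directly with $g\in\partial\Lambda(t^\ast)$. What the paper's route buys in exchange is structural information — a genuine lower bound on the generic Lelong number of $dd^cu$ along the coordinate divisors, i.e.\ a piece of the Siu decomposition — and it rehearses the regularize-and-restrict technique reused later (e.g.\ in Lemma \ref{L:proj}). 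Three routine points you should still write out: the joint convexity of $\Lambda$, which you correctly flag (e.g.\ write $\Lambda=\sup_\theta m_\theta$ over unit vectors $\theta^j\in{\mathbb C}^{n_j}$, where $m_\theta$ is the torus-sup of the psh function $(\lambda_1,\ldots,\lambda_k)\mapsto u(\lambda_1\theta^1,\ldots,\lambda_k\theta^k)$ on ${\mathbb C}^k$, convex in the log-radii by the classical Hadamard-type result); the a.e.-differentiability or subgradient justification of $h'\leq 0$, since $\Lambda$ need not be smooth; and the extension of the final inequality across $\{\zeta^j=0\}$ when $\nu+a_j-a>0$, via the sub-mean value inequality (the paper's Step 3, which also fixes all $\zeta^j\neq0$, leaves the same point implicit). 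One small inaccuracy: the hypothesis $\nu\geq a-a_j$ is not where your subadditivity step needs help — the estimate $\mu_j\geq\nu+a_j-a$ holds unconditionally — it only guarantees that the stated bound improves on \eqref{e:Lp} and that its right-hand side is plurisubharmonic.
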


\begin{proof}
By Proposition \ref{P:psh} we have that $\nu\leq a$. We divide the proof in three steps.

{\em Step 1.} We assume here that $n_1=\ldots=n_k=1$ and $v$ is a psh function on ${\mathbb C}^k$ such that  
\begin{equation}\label{e:L1}
v(t_1,\ldots,t_k)\leq\sum_{j=1}^ka_j\log^+|t_j|+C\,,\;\forall\,(t_1,\ldots,t_k)\in{\mathbb C}^k\,,
\end{equation}
for some constant $C$, and $\nu:=\nu(v,0)\geq a-a_j$ for some $j$. We show that $(i)$ holds for $v$. 

Assume without loss of generality that $j=1$ and let $\alpha_1$ denote the generic Lelong number of $v$ along $\{t_1=0\}$. Then $\alpha_1\leq\nu$. Moreover, if $t':=(t_2,\ldots,t_k)$ is such that the function $v(\cdot,t')\not\equiv-\infty$, then by \eqref{e:L1}, $v(\cdot,t')\in a_1{\mathcal L}(\mathbb C)$.  Using Proposition \ref{P:proj} we infer that  
\[\nu(v,(0,t'))\leq\nu(v(\cdot,t'),0)\leq a_1\,,\,\text{ so }\,\alpha_1\leq a_1\,.\]

By Siu's decomposition theorem \cite{Siu74}, $dd^cv=\alpha_1[t_1=0]+R$, where $[t_1=0]$ denotes the current of integration along the hyperplane $\{t_1=0\}$ and $R$ is a positive closed current of bidegree $(1,1)$ on ${\mathbb C}^k$, with generic Lelong number $0$ along $\{t_1=0\}$. Hence $w:=v-\alpha_1\log|t_1|$ extends to a psh function on ${\mathbb C}^k$, which satisfies $dd^cw=R$ and 
\begin{equation}\label{e:L2}
w(t_1,\ldots,t_k)\leq(a_1-\alpha_1)\log^+|t_1|+\sum_{j=2}^ka_j\log^+|t_j|+C\,,\,\text{ $\forall\,(t_1,\ldots,t_k)\in{\mathbb C}^k$,}
\end{equation}
where $C$ is the constant from \eqref{e:L1}. Indeed, this clearly holds if $|t_1|\geq1$. Applying the maximum principle for $w(\cdot,t')$, with $t'$ fixed, then shows that \eqref{e:L2} holds everywhere. 

We now estimate $\alpha_1$. Consider the current $S$ on $({\mathbb P}^1)^k$ determined by $w$, so $S\mid_{{\mathbb C}^k}=R$ (see \eqref{e:Lp} and \eqref{e:logL}). By \eqref{e:L2} we have
\[S\sim(a_1-\alpha_1)\omega_{t_1}+\sum_{j=2}^ka_j\omega_{t_j}\,,\]
where $\omega_{t_j}=\pi_j^\star\omega_{FS}$ and $\pi_j$ is the projection onto the $j$-th factor. By Demailly's regularization theorem \cite[Proposition 3.7]{D92}, there exists, for every $\varepsilon>0$, a positive closed current $S_\varepsilon$ of bidegree $(1,1)$ on $({\mathbb P}^1)^k$, with analytic singularities and such that 
\[S_\varepsilon\sim(a_1-\alpha_1+\varepsilon)\omega_{t_1}+\sum_{j=2}^k(a_j+\varepsilon)\omega_{t_j}\,,\,\;\nu(S,x)-\varepsilon\leq\nu(S_\varepsilon,x)\leq\nu(S,x),\;\forall\,x\in({\mathbb P}^1)^k\,.\]
Thus $S_\varepsilon$ is smooth near all points where $\nu(S,x)=0$, and in particular near the generic point of $\{t_1=0\}$. Let $w_\varepsilon$ be the psh potential of $S_\varepsilon$ on ${\mathbb C}^k$ defined in \eqref{e:Lp}. Then 
\[w_\varepsilon(t_1,\ldots,t_k)\leq(a_1-\alpha_1+\varepsilon)\log^+|t_1|+\sum_{j=2}^k(a_j+\varepsilon)\log^+|t_j|+C_\varepsilon\,,\,\text{ $\forall\,(t_1,\ldots,t_k)\in{\mathbb C}^k$,}\]
for some constant $C_\varepsilon$. Moreover, $\nu(w_\varepsilon,0)\geq\nu(w,0)-\varepsilon=\nu-\alpha_1-\varepsilon$, and $w_\varepsilon$ is smooth near the generic point of $\{t_1=0\}$. Thus $w_\varepsilon(0,\cdot)$ is psh on ${\mathbb C}^{k-1}$ and satisfies
\[w_\varepsilon(0,t')\leq\left(\sum_{j=2}^ka_j+(k-1)\varepsilon\right)\log^+|t'|+C_\varepsilon\,.\]
We infer by Proposition \ref{P:proj} that 
$\nu-\alpha_1-\varepsilon\leq\nu(w_\varepsilon,0)\leq\nu(w_\varepsilon(0,\cdot),0)\leq a-a_1+(k-1)\varepsilon$.
Letting $\varepsilon\searrow0$ yields that $\alpha_1\geq\nu+a_1-a$. 

By \eqref{e:L2},
\[v(t_1,\ldots,t_k)\leq\alpha_1\log|t_1|+(a_1-\alpha_1)\log^+|t_1|+\sum_{j=2}^ka_j\log^+|t_j|+C\,.\]
Since $\alpha_1\geq\nu+a_1-a$ it follows that 
\[v(t_1,\ldots,t_k)\leq(\nu+a_1-a)\log|t_1|+(a-\nu)\log^+|t_1|+\sum_{j=2}^ka_j\log^+|t_j|+C\,,\]
for all $(t_1,\ldots,t_k)\in{\mathbb C}^k$. This concludes Step 1. 

\medskip

{\em Step 2.} We show here that assertion $(ii)$ of Theorem \ref{T:psh} holds for functions $v$ that verify \eqref{e:L1}, if $\nu:=\nu(v,0)\geq\max\{a-a_1,\ldots,a-a_k\}$. Let $\alpha_j$ denote the generic Lelong number of $v$ along $\{t_j=0\}$. Then $\alpha_j\leq\nu$ and by Step 1, $\nu+a_j-a\leq\alpha_j\leq a_j$ for all $1\leq j\leq k$. By Siu's decomposition theorem \cite{Siu74}, the function 
\[w(t_1,\ldots,t_k):=v(t_1,\ldots,t_k)-\sum_{j=1}^k\alpha_j\log|t_j|\] 
extends to a psh function on ${\mathbb C}^k$, which satisfies 
\[w(t_1,\ldots,t_k)\leq\sum_{j=1}^k(a_j-\alpha_j)\log^+|t_j|+C\,,\,\text{ $\forall\,(t_1,\ldots,t_k)\in{\mathbb C}^k$,}\]
where $C$ is the constant from \eqref{e:L1}. Hence 
\[\begin{split}
v(t_1,\ldots,t_k)&\leq\sum_{j=1}^k\big(\alpha_j\log|t_j|+(a_j-\alpha_j)\log^+|t_j|\big)+C\\
&\leq\sum_{j=1}^k\big((\nu+a_j-a)\log|t_j|+(a-\nu)\log^+|t_j|\big)+C\,,
\end{split}\]
which is the desired conclusion.

\medskip

{\em Step 3.} We complete the proof of the theorem, for the case of arbitrary dimensions $n_j\geq1$. This follows immediately from Steps 1 and 2 by using $k$-dimensional slices of ${\mathbb C}^n$ as we now indicate. Fix $\zeta^j\neq0$, $1\leq j\leq k$, and consider the function 
\[v(t_1,\ldots,t_k)=u\left(t_1\frac{\zeta^1}{|\zeta^1|},\ldots,t_k\frac{\zeta^k}{|\zeta^k|}\right)\,,\,\;(t_1,\ldots,t_k)\in{\mathbb C}^k\,.\]
Since $u$ satisfies \eqref{e:Lp}, we have that $v$ is psh on ${\mathbb C}^k$ and verifies \eqref{e:L1} with the constant $C$ from \eqref{e:Lp}. Moreover $\nu(u,0)\leq\nu(v,0)$. The conclusions of the theorem now follow from the ${\mathbb C}^k$ case, since $u(\zeta^1,\ldots,\zeta^k)=v(|\zeta^1|,\ldots,|\zeta^k|)$. 
\end{proof}

\section{Proofs of the main results}\label{S:proofs}

In this section we give the proofs of Theorems \ref{T:t1}--\ref{T:t4}. 

\begin{proof}[Proof of Theorem \ref{T:t1}]
Let $x\in X$. We may assume that $x=0\in{\mathbb C}^n$. We have $T\mid_{{\mathbb C}^n}=dd^cu$ for a psh function $u$ satisfying \eqref{e:Lp}, so by Proposition \ref{P:psh}, $\nu(T,x)=\nu(u,0)\leq a$. 

Assume now that $E_a(T)\neq\emptyset$. By \eqref{e:omp}, $T=\omega+dd^c\varphi$ for an $\omega$-psh function $\varphi$ on $X$. For $1\leq j\leq k$ we define 
\[L_j=\Span\pi_j(E_a(T))\subset{\mathbb P}^{n_j}\,,\,\;\ell_j=\dim L_j\,.\]
Let $\eta_j:{\mathbb C}^{n_j+1}\to{\mathbb C}^{n_j-\ell_j}$ be a surjective linear map such that $L_j=\Pi_j(\ker\eta_j\setminus\{0\})$.

\medskip

For $1\leq j\leq k$ we will prove that $\ell_j\leq n_j-1$ and 
\begin{equation}\label{e:ind}
\begin{split}
&\exists\,\psi_j \text{ $\omega$-psh on } Y_j:={\mathbb P}^{n_1-\ell_1-1}\times\ldots\times{\mathbb P}^{n_j-\ell_j-1}\times{\mathbb P}^{n_{j+1}}\times\ldots\times{\mathbb P}^{n_k} \text{ such that, on $X$,}\\
&\varphi([z^1],\ldots,[z^k])=\sum_{m=1}^ja_m\log\frac{|\eta_m(z^m)|}{|z^m|}+\psi_j([\eta_1(z^1)],\ldots,[\eta_j(z^j)],[z^{j+1}],\ldots,[z^k]),
\end{split}
\end{equation}
where, by abuse of notation, $\omega=\sum_{m=1}^ka_m\pi^\star_m\omega_{FS}$ and $\pi_m$ is the projection of $Y_j$ onto its $m$-th factor. Moreover, $[\eta_m(z^m)]=[\eta_m]([z^m])$, where $[\eta_m]:{\mathbb P}^{n_m}\dashrightarrow{\mathbb P}^{n_m-\ell_m-1}$ is the projection induced by $\eta_m$. The proof is by induction on $j=1,\ldots,k$. 

\medskip

Let $j=1$ and fix $x'\in{\mathbb P}^{n_2}\times\ldots\times{\mathbb P}^{n_k}$ such that $\varphi(\cdot,x')\not\equiv-\infty$. We claim that $\pi_1(E_a(T))\subset E_{a_1}(R)$, where $R$ is the positive closed current on ${\mathbb P}^{n_1}$ defined by $R=a_1\omega_{FS}+dd^c\varphi(\cdot,x')$. Indeed, let $p\in\pi_1(E_a(T))$ and fix $q\in{\mathbb P}^{n_2}\times\ldots\times{\mathbb P}^{n_k}$ such that $(p,q)\in E_a(T)$. Without loss of generality we may assume that $(p,q)=0\in{\mathbb C}^n$ and $x'\in{\mathbb C}^{n_2}\times\ldots\times{\mathbb C}^{n_k}$. Proposition \ref{P:psh} applied to the psh function 
\begin{equation}\label{e:uphi}
u(\zeta^1,\ldots,\zeta^k)=\sum_{m=1}^ka_m\log\sqrt{1+|\zeta^m|^2}\,+\varphi([1:\zeta^1],\ldots,[1:\zeta^k])
\end{equation}
shows that 
\[u(\zeta^1,\ldots,\zeta^k)\leq\sum_{m=1}^ka_m\log|\zeta^m|+C \;\text{ on ${\mathbb C}^n$}\,,\]
for some constant $C$. Since $u(\cdot,x')\not\equiv-\infty$ this implies that $\nu(u(\cdot,x'),0)\geq a_1$. As $u(\cdot,x')\in a_1{\mathcal L}(\mathbb C)$ it follows by Proposition \ref{P:proj} that $\nu(u(\cdot,x'),0)=a_1$. Hence $p\in E_{a_1}(R)$, which proves our claim. 

By Proposition \ref{P:proj}, $E_{a_1}(R)$ is a proper linear subspace of ${\mathbb P}^{n_1}$, so $L_1\subset E_{a_1}(R)$ and $\ell_1\leq n_1-1$. Moreover
\begin{equation}\label{e:d1}
\varphi([z^1],x')=a_1\log\frac{|\eta_1(z^1)|}{|z^1|}+\psi_1([\eta_1(z^1)],x')\,,
\end{equation}
where $\psi_1(\cdot,x')$ is $a_1\omega_{FS}$-psh on ${\mathbb P}^{n_1-\ell_1-1}$. Define $\psi_1(\cdot,x')\equiv-\infty$ for $x'$ such that $\varphi(\cdot,x')\equiv-\infty$. We conclude that \eqref{e:d1} holds on $X$ and $\psi_1$ is $\omega$-psh on $Y_1$. Hence \eqref{e:ind} holds for $j=1$.

\medskip

We assume next that \eqref{e:ind} holds for $j-1<k$ and prove it for $j$. Then 
\begin{equation}\label{e:d2}
\varphi([z^1],\ldots,[z^k])=\sum_{m=1}^{j-1}a_m\log\frac{|\eta_m(z^m)|}{|z^m|}+\psi_{j-1}[\eta_1(z^1)],\ldots,[\eta_{j-1}(z^{j-1})],[z^j],\ldots,[z^k]),
\end{equation}
holds on $X$, where $\psi_{j-1}$ is $\omega$-psh on $Y_{j-1}$. Fix $y=(y_1,\ldots,y_{j-1})\in{\mathbb P}^{n_1-\ell_1-1}\times\ldots\times{\mathbb P}^{n_{j-1}-\ell_{j-1}-1}$ and $x'\in{\mathbb P}^{n_{j+1}}\times\ldots\times{\mathbb P}^{n_k} $ such that $\psi_{j-1}(y,\cdot,x')\not\equiv-\infty$. Note that if $x''=(x_1,\ldots,x_{j-1})$ is such that $x_m\in{\mathbb P}^{n_m}\setminus L_m$ and $[\eta_m](x_m)=y_m$, $1\leq m\leq j-1$, then by \eqref{e:d2}, $\varphi(x'',\cdot,x')\not\equiv-\infty$. We claim that $\pi_j(E_a(T))\subset E_{a_j}(R)$, where $R$ is the positive closed current on ${\mathbb P}^{n_j}$ defined by $R=a_j\omega_{FS}+dd^c\psi_{j-1}(y,\cdot,x')$. Indeed, let $p\in\pi_j(E_a(T))$ and fix $q''\in{\mathbb P}^{n_1}\times\ldots\times{\mathbb P}^{n_{j-1}}$, $q'\in{\mathbb P}^{n_{j+1}}\times\ldots\times{\mathbb P}^{n_k}$ such that $(q'',p,q')\in E_a(T)$. Without loss of generality we may assume that $(q'',p,q')=0\in{\mathbb C}^n$ and $x''\in{\mathbb C}^{n_1}\times\ldots\times{\mathbb C}^{n_{j-1}}$, $x'\in{\mathbb C}^{n_{j+1}}\times\ldots\times{\mathbb C}^{n_k}$. Applying Propositions \ref{P:psh} and \ref{P:proj} to the psh function $u$ from \eqref{e:uphi}, we infer that $\nu(u(x'',\cdot,x'),0)=a_j$. By \eqref{e:d2} and the choice of $x''$ this implies that $\nu(\psi_{j-1}(y,\cdot,x'),0)=a_j$. So $p\in E_{a_j}(R)$, and our claim is proved. 

By Proposition \ref{P:proj}, $E_{a_j}(R)$ is a proper linear subspace of ${\mathbb P}^{n_j}$, so $L_j\subset E_{a_j}(R)$ and $\ell_j\leq n_j-1$. Moreover
\begin{equation}\label{e:d3}
\psi_{j-1}(y,[z^j],x')=a_j\log\frac{|\eta_j(z^j)|}{|z^j|}+\psi_j(y,[\eta_j(z^j)],x')\,,
\end{equation}
where $\psi_j(y,\cdot,x')$ is $a_j\omega_{FS}$-psh on ${\mathbb P}^{n_j-\ell_j-1}$. Define $\psi_j(y,\cdot,x')\equiv-\infty$ for $y,x'$ such that $\psi_{j-1}(y,\cdot,x')\equiv-\infty$. By \eqref{e:d2} and \eqref{e:d3} we conclude that 
\[\varphi([z^1],\ldots,[z^k])=\sum_{m=1}^ja_m\log\frac{|\eta_m(z^m)|}{|z^m|}+\psi_j([\eta_1(z^1)],\ldots,[\eta_j(z^j)],[z^{j+1}],\ldots,[z^k])\]
holds on $X$, hence $\psi_j$ is $\omega$-psh on $Y_j$. This concludes the proof of \eqref{e:ind} by induction on $j$. 

\medskip

By the definition of $L_j$, we have $\pi_j(E_a(T))\subset L_j$ for $1\leq j\leq k$, so $E_a(T)\subset L_1\times\ldots\times L_k$. On the other hand, formula \eqref{e:ind} for $j=k$ shows that $E_a(T)\supset L_1\times\ldots\times L_k$. Consider the current $S=\omega+dd^c\psi_k$ on $Y:=Y_k$. Then $S\in\mathcal T_{a_1,\ldots,a_k}(Y)$ and by \eqref{e:ind}, $T=\wp^\star S$, where $\wp=[\eta_1]\times\ldots\times[\eta_k]:X\dashrightarrow Y$. If $y\in E_a(S)$ and $x=(x_1,\ldots,x_k)$ is such that $x_j\in{\mathbb P}^{n_j}\setminus L_j$ for $1\leq j\leq k$, and $\wp(x)=y$, then $\nu(T,x)\geq\nu(S,y)$, so $x\in E_a(T)$, a contradiction. Thus $E_a(S)=\emptyset$ and the proof is of Theorem \ref{T:t1} is complete. 
\end{proof}

\smallskip

\begin{proof}[Proof of Theorem \ref{T:t2}]
$(i)$ We write $T=\omega+dd^c\varphi$, where $\varphi$ is an $\omega$-psh function on $X$, and we assume without loss of generality that $j=1$. Set $X'={\mathbb P}^{n_2}\times\ldots\times{\mathbb P}^{n_k}$ and $E=\{x'\in X':\,\varphi(\cdot,x')\equiv-\infty\}$. Note that $E$ is locally pluripolar, since $E\subset\{x'\in X':\,\varphi(x_1,x')=-\infty\}$ for some fixed $x_1\in{\mathbb P}^{n_1}$ such that $\varphi(x_1,\cdot)\not\equiv-\infty$. 

For $x'\in X'\setminus E$ define $T_{x'}=a_1\omega_{FS}+dd^c\varphi(\cdot,x')$. Then $T_{x'}$ is a positive closed current of bidegree $(1,1)$ on ${\mathbb P}^{n_1}$ of mass $\|T_{x'}\|=a_1$. We claim that 
\[\pi_1\big(E^+_{\nu_1}(T)\big)\subset E^+_{\nu_1+a_1-a}(T_{x'})\,.\] 
Indeed, let $p\in\pi_1\big(E^+_{\nu_1}(T)\big)$ and fix $q\in X'$ such that $(p,q)\in E^+_{\nu_1}(T)$. Without loss of generality we may assume that $(p,q)=0\in{\mathbb C}^n$ and $x'\in{\mathbb C}^{n_2}\times\ldots\times{\mathbb C}^{n_k}$. Note that 
\[\nu:=\nu(T,(p,q))>\nu_1\geq a-a_1\,,\] 
so Theorem \ref{T:psh} applied to the psh function $u$ defined in \eqref{e:uphi} shows that 
\[u(\zeta^1,\ldots,\zeta^k)\leq(\nu+a_1-a)\log|\zeta^1|+ (a-\nu)\log^+|\zeta_1|+\sum_{\ell=2}^ka_\ell\log^+|\zeta^\ell|+C \;\text{ on ${\mathbb C}^n$}\,,\]
for some constant $C$. Since $u(\cdot,x')\not\equiv-\infty$ this implies that 
\[\nu(u(\cdot,x'),0)\geq\nu+a_1-a>\nu_1+a_1-a=\frac{n_1a_1}{n_1+1}\;.\]
Thus $p\in E^+_{\nu_1+a_1-a}(T_{x'})$, and our claim is proved.

Note that $E^+_{\nu_1+a_1-a}(T_{x'})=E^+_{n_1/(n_1+1)}(T_{x'}/a_1)$ and that the current $T_{x'}/a_1$ has unit mass. We infer by \cite[Proposition 2.2]{CG09} that $\Span E^+_{\nu_1+a_1-a}(T_{x'})$ is a proper linear subspace of ${\mathbb P}^{n_1}$. It follows that 
\[\pi_1\big(E^+_{\nu_1}(T)\big)\subset V_1:=\bigcap_{x'\in X'\setminus E}\Span E^+_{\nu_1+a_1-a}(T_{x'})\,.\]

\smallskip

$(ii)$ This follows from $(i)$, since $\pi_j\big(E^+_{\nu_0}(T)\big)\subset\pi_j\big(E^+_{\nu_j}(T)\big)\subset V_j$ for all $1\leq j\leq k$.  
\end{proof}

\smallskip

\begin{proof}[Proof of Theorem \ref{T:t3}]
$(i)$ We assume without loss of generality that $j=1$ and use the same notation as in the proof of Theorem \ref{T:t2}. Fix $x'\in X'\setminus E$. Since $\nu_1\geq a-a_1$, it follows as in the proof of Theorem \ref{T:t2} that $\pi_1\big(E^+_{\nu_1}(T)\big)\subset E^+_{\nu_1+a_1-a}(T_{x'})$. Note that 
\[\nu_1+a_1-a=\frac{(n_1-1)a_1}{n_1}\;.\] 
We infer by \cite[Theorem 1.2]{CT15} that the set $E^+_{\nu_1+a_1-a}(T_{x'})$ is either contained in a hyperplane of ${\mathbb P}^{n_1}$, or else it is a finite set and $|E^+_{\nu_1+a_1-a}(T_{x'})\setminus L|=n_1-1$ for some line $L$. 

If $\pi_1\big(E^+_{\nu_1}(T)\big)$ is not contained in a hyperplane, then neither is $E^+_{\nu_1+a_1-a}(T_{x'})$. It follows that $\pi_1\big(E^+_{\nu_1}(T)\big)$ is a finite set and $\pi_1\big(E^+_{\nu_1}(T)\big)\setminus L_1=A_1$ for some line $L_1$ and set $A_1$ with $|A_1|\leq n_1-1$. However since $\pi_1\big(E^+_{\nu_1}(T)\big)$ is not contained in a hyperplane, we must have that $|A_1|=n_1-1$ and $\Span(L_1\cup A_1)={\mathbb P}^{n_1}$.

\smallskip

$(ii)$ This follows readily from $(i)$.
\end{proof}

\smallskip

\begin{proof}[Proof of Theorem \ref{T:t4}]
We recall \cite[Theorem 1.1]{He19} for bidegree $(1,1)$ currents: Let $S$ be a positive closed current of bidegree $(1,1)$ on ${\mathbb P}^N$, $N\geq2$, with mass $\|S\|=b$, and let 
\begin{equation}\label{e:gamma}
\alpha>\frac{b(N-1)}{N}\;,\,\;\gamma=\gamma(N,b,\alpha)=\frac{bN(N-1)-\alpha}{N^2-1}\;.
\end{equation}
If $\nu(S,p)\geq\alpha$, $\nu(S,q)\geq\alpha$, for some points $p\neq q$, then either $E^+_\gamma(S)$ is contained in a hyperplane or there exists a complex line $L$ such that $|E^+_\gamma(S)\setminus L|=N-1$.

$(i)$  Following what was done in the proof of Theorem \ref{T:t3}, we let $j=1$, so $p_1 \neq q_1$. We have $\alpha_1>\nu_1\geq a-a_1$ and, since $\alpha_1\leq a$, we get $\beta_1\geq a-a_1$. We infer that $p_1,q_1\in E_{\alpha_1+a_1-a}(T_{x'})$ and $\pi_1\big(E^+_{\beta_1}(T)\big)\subset E^+_{\beta_1+a_1-a}(T_{x'})$. Note that 
\[\alpha_1+a_1-a>\nu_1+a_1-a=\frac{a_1(n_1-1)}{n_1}\;,\,\;\beta_1+a_1-a=\gamma(n_1,a_1,\alpha_1+a_1-a)\,,\]
where $\gamma$ is defined in \eqref{e:gamma}. Applying \cite[Theorem 1.1]{He19} gives us that if $E^+_{\beta_1 +a_1 - a}(T_{x'})$ is not contained in a hyperplane, then $|E^+_{\beta_1+a_1-a}(T_{x'})\setminus L|=n_1-1$ for some line $L$.  Thus $E^+_{\beta_1}(T)$ satisfies the conclusion.

Assertion $(ii)$ follows immediately from $(i)$.
\end{proof}

\smallskip

We conclude this section with a series of examples which show that our theorems are sharp. 

\begin{Example} 
We show here that the values $\nu_j$ and $\nu_0$ from Theorem \ref{T:t2} are sharp for the geometric properties of the corresponding upper level sets of Lelong numbers. Assume without loss of generality that $j=1$ and let $S=\{s_0,\ldots,s_{n_1}\}\subset{\mathbb P}^{n_1}$ such that $\Span S={\mathbb P}^{n_1}$. Consider the hyperplanes $H^m_1=\Span(S\setminus\{s_m\})\subset{\mathbb P}^{n_1}$, $0\leq m\leq n_1$, and fix some hyperplanes $H_j\subset{\mathbb P}^{n_j}$, $2\leq j\leq k$. We denote by $[H]$ the current of integration along a hyperplane $H\subset{\mathbb P}^N$ and we let 
\[T=\frac{a_1}{n_1+1}\,\sum_{m=0}^{n_1}\pi_1^\star[H^m_1]+\sum_{j=2}^ka_j\pi_j^\star[H_j]\,.\]
Then $T\in\mathcal T$, $E^+_{\nu_1}(T)=\emptyset$, and $E_{\nu_1}(T)=S\times H_2\times\ldots\times H_k$. So $\pi_1\big(E_{\nu_1}(T)\big)$ is not contained in any hyperplane of ${\mathbb P}^{n_1}$.

Assume without loss of generality that $\nu_0=\nu_1$. Then the above current $T$ shows that the value $\nu_0$ is sharp with respect to the geometric property from Theorem \ref{T:t2} $(ii)$.
\end{Example}

\begin{Example}
To show that the values $\nu_j$ and $\nu_0$ from Theorem \ref{T:t3} are sharp we use a similar idea as in the previous example. By \cite[Examples 3.5, 3.6]{C06} and \cite[Section 2.3]{CT15}, there exists, for every $N\geq2$, a positive closed current $R_N$ on ${\mathbb P}^N$ of bidegree $(1,1)$ and unit mass $\|R_N\|=1$, such that the set $E_{(N-1)/N}(R_N)$ is not contained in any hyperplane of  ${\mathbb P}^N$ and $|E_{(N-1)/N}(R_N)\setminus L|\geq N$ for every line $L\subset{\mathbb P}^N$. Assume without loss of generality that $j=1$ and fix some hyperplanes $H_j\subset{\mathbb P}^{n_j}$, $2\leq j\leq k$. We let 
\[T=a_1\pi_1^\star R_{n_1}+\sum_{j=2}^ka_j\pi_j^\star[H_j]\,.\]
Then $T\in\mathcal T$ and $E_{\nu_1}(T)\supset E_{(n_1-1)/n_1}(R_{n_1})\times H_2\times\ldots\times H_k$. So $\pi_1\big(E_{\nu_1}(T)\big)$ is not contained in any hyperplane of ${\mathbb P}^{n_1}$ and $|\pi_1\big(E_{\nu_1}(T)\big)\setminus L|\geq n_1$ for every line $L\subset{\mathbb P}^{n_1}$. 
\end{Example}

\begin{Example}
In Theorem \ref{T:t4}, we need to have two components $p_j\neq q_j$, otherwise the conclusion fails to hold.  To see this, we use the current from the first example  in \cite[Section 3]{He19} in the bidegree $(1,1)$ case. In particular we consider the hyperplanes $H^m_1$, $1\leq m \leq n_1$, from Example 4.1, and notice $s_0\in H^m_1$ for all $m$.  We consider the current $R_1$ on $\mathbb{P}^{n_1}$ given by
\[R_1= \frac{1}{n_1}\,\sum_{m=1}^{n_1} [H^m_1]\,.\] 
Observe that $\nu(R_1, s_0) = 1$, for any point $x\in L^m_1 =\Span\{s_0, s_m\}$, $\nu(R_1,x) = \frac{n_1 -1}{n_1}$, and $\bigcup_{m=1}^{n_1} L^m_1$ is not contained in a hyperplane.  Letting $[H_j]$ be as in the previous two examples, we consider again the current   
\[T=a_1\pi_1^\star R_1+\sum_{j=2}^ka_j\pi_j^\star[H_j]\,.\]
Note that for $\nu_1<\alpha_1\leq a$, $E_{\alpha_1}(T)=E_a(T)=\{s_0\}\times H_2 \times\ldots \times H_k$, so $\pi_1\big(E_{a}(T)\big)=\{s_0\}$. Moreover $\bigcup_{m=1}^{n_1} L^m_1\subset\pi_1\big(E_{\beta_1}^+(T)\big)$, so assertion $(i)$ of Theorem \ref{T:t4} does not hold for $T$.
\end{Example}

\section{Currents on projective spaces}\label{S:fr}
If $T$ is a positive closed current of bidegree $(1,1)$ on ${\mathbb P}^n$ with mass $\|T\|=1$, it is shown in \cite[Proposition 2.2]{CG09} that $E^+_{n/(n+1)}(T)$ is contained in a hyperplane of ${\mathbb P}^n$ (see also \cite[Theorem 1.1]{CT15} for the case of currents of any bidegree $(q,q)$). The value $n/(n+1)$ is sharp, as shown by the following example. Let $S=\{p_0,\ldots,p_n\}\subset{\mathbb P}^n$ such that $\Span S={\mathbb P}^n$, and consider the hyperplanes $H_j=\Span(S\setminus\{p_j\})$, $0\leq j\leq n$. If
\begin{equation}\label{e:pe}
T=\frac{1}{n+1}\,\sum_{j=0}^n\,[H_j]\,,
\end{equation}
then $E_{n/(n+1)}(T)=S$ is not contained in any hyperplane. We show here that such currents $T$ provide the only examples:

\begin{Theorem}\label{T:proj}
If $T$ is a positive closed current of bidegree $(1,1)$ on ${\mathbb P}^n$ and $\|T\|=1$, then either $E_{n/(n+1)}(T)$ is contained in a hyperplane of ${\mathbb P}^n$ or else $T$ is a current of form \eqref{e:pe}.
\end{Theorem}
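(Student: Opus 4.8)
The plan is to prove the contrapositive: assuming $E_{n/(n+1)}(T)$ is not contained in any hyperplane, I will show that $T$ must have the form \eqref{e:pe}. The case $n=1$ is elementary (two points of $\mathbb{P}^1$ carrying mass $\ge\tfrac12$ force $T=\tfrac12(\delta_{p_0}+\delta_{p_1})$), so I assume $n\ge2$. A subset of $\mathbb{P}^n$ not contained in a hyperplane contains $n+1$ points that span $\mathbb{P}^n$; fix such points $p_0,\dots,p_n\in E_{n/(n+1)}(T)$, so $\nu(T,p_i)\ge\tfrac{n}{n+1}$ for each $i$. Put $H_j=\Span\{p_i:i\neq j\}$; these are $n+1$ distinct hyperplanes with $p_m\in H_j\iff j\neq m$. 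Let $\lambda_j\ge0$ be the generic Lelong number of $T$ along $H_j$ and decompose $T=\sum_{j=0}^n\lambda_j[H_j]+R'$, where $R'$ is positive closed with generic Lelong number $0$ along every $H_j$; set $\sigma=\sum_j\lambda_j$ and $s=\|R'\|=1-\sigma$. Additivity of Lelong numbers over this decomposition gives $\nu(T,p_m)=\sum_{j\neq m}\lambda_j+\nu(R',p_m)=(\sigma-\lambda_m)+\nu(R',p_m)$, whence
\[\nu(R',p_m)\ \ge\ \tfrac{n}{n+1}-\sigma+\lambda_m,\qquad 0\le m\le n.\]

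The key auxiliary step, which I would isolate as a lemma and prove by induction on dimension, is the following mass bound: if $\Theta$ is a positive closed $(1,1)$-current on $\mathbb{P}^N$ and $q_0,\dots,q_N$ span $\mathbb{P}^N$, then $\sum_{j=0}^N\nu(\Theta,q_j)\le N\|\Theta\|$. For $N=1$ this just says the masses $\Theta$ places at two distinct points of $\mathbb{P}^1$ sum to at most $\|\Theta\|$. For the inductive step I would set $G=\Span\{q_1,\dots,q_N\}$, peel off the generic Lelong number $\lambda$ of $\Theta$ along $G$ by writing $\Theta=\lambda[G]+\Theta_0$, and restrict $\Theta_0$ to $G\cong\mathbb{P}^{N-1}$ (legitimate since $\Theta_0$ has generic Lelong number $0$ along $G$, giving a current of mass $\|\Theta\|-\lambda$). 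Applying the inductive hypothesis to $q_1,\dots,q_N$ on $G$, together with the restriction inequality $\nu(\Theta_0|_G,q_j)\ge\nu(\Theta_0,q_j)$ for $j\ge1$ and the elementary bound $\nu(\Theta_0,q_0)\le\|\Theta_0\|$ at the point $q_0\notin G$, a short computation yields the bound in dimension $N$.

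With this lemma I would squeeze $\lambda_j$ from both sides. The elementary bound $\nu(R',p_j)\le\|R'\|=s$ applied to the displayed lower bound gives $\tfrac{n}{n+1}-\sigma+\lambda_j\le 1-\sigma$, i.e.\ $\lambda_j\le\tfrac1{n+1}$ for every $j$. For the reverse inequality I would restrict $R'$ to $H_i$ (again legitimate, as $R'$ has generic Lelong number $0$ along $H_i$), obtaining a positive closed current on $H_i\cong\mathbb{P}^{n-1}$ of mass $s$ whose values at the $n$ spanning points $\{p_m:m\neq i\}$ satisfy $\nu(R'|_{H_i},p_m)\ge\nu(R',p_m)$. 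The lemma in dimension $n-1$ gives $\sum_{m\neq i}\nu(R',p_m)\le(n-1)s$; substituting the lower bounds and simplifying—crucially, the dependence on $\sigma$ cancels—yields $\lambda_i\ge\tfrac1{n+1}$. Hence $\lambda_j=\tfrac1{n+1}$ for all $j$, so $\sigma=1$, $s=0$, $R'=0$, and $T=\tfrac1{n+1}\sum_{j=0}^n[H_j]$, which is exactly \eqref{e:pe}.

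I expect the main obstacle to be the auxiliary mass bound $\sum_j\nu(\Theta,q_j)\le N\|\Theta\|$: the whole argument hinges on having simultaneously the global form (in dimension $n$, forcing $\lambda_j\le\tfrac1{n+1}$) and its hyperplane-restricted form (in dimension $n-1$, forcing $\lambda_j\ge\tfrac1{n+1}$), as well as on the exact cancellation of $\sigma$ in the second computation. Some care is also needed to guarantee that each restriction to a hyperplane is well defined; this is precisely why one peels off the generic Lelong numbers first, so that the restricted currents have finite mass and the restriction inequality for Lelong numbers applies.
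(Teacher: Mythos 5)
Your skeleton is correct and all the computations check out: the decomposition $T=\sum_{j=0}^n\lambda_j[H_j]+R'$, the additivity estimate $\nu(R',p_m)\ge\frac{n}{n+1}-\sigma+\lambda_m$, the upper bound $\lambda_j\le\frac1{n+1}$ from $\nu(R',p_m)\le\|R'\|$, and the lower bound $\lambda_i\ge\frac1{n+1}$ from your spanning-points inequality applied on $H_i$ (the $\sigma$'s do cancel exactly as you predict), forcing $R'=0$ and $T$ of the form \eqref{e:pe}. This is a genuinely different route from the paper's. The paper first proves Lemma \ref{L:proj} --- $n$ linearly independent points in $E_\nu(T)$ force $T\ge(n\nu-n+1)[H]$ --- by Demailly regularization together with the result of \cite[Proposition 2.2]{CG09} that $E^+_{(n-1)/n}$ of a unit-mass current on ${\mathbb P}^{n-1}$ lies in a hyperplane; it then applies this to each $H_j$ and compares masses. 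Your auxiliary inequality $\sum_{j=0}^N\nu(\Theta,q_j)\le N\|\Theta\|$ for spanning points $q_0,\ldots,q_N$ is proved by a self-contained induction on dimension and in fact implies Lemma \ref{L:proj}: writing $T=\alpha[H]+\Theta_0$ and applying your inequality in dimension $n-1$ to $\Theta_0|_H$ at $p_1,\ldots,p_n$ gives $n(\nu-\alpha)\le(n-1)(1-\alpha)$, i.e. $\alpha\ge n\nu-n+1$. So your argument buys independence from the $E^+_{(n-1)/n}$ hyperplane theorem of \cite{CG09}, at the cost of proving the quantitative lemma yourself.

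There is, however, one genuine gap: your justification of the hyperplane restrictions. Vanishing generic Lelong number along $G$ ensures that $\Theta_0$ contains no multiple of $[G]$, but it does \emph{not} make $\Theta_0|_G$ well defined: writing $\Theta_0=m\,\omega_{FS}+dd^c\varphi$, the restriction exists only when $\varphi|_G\not\equiv-\infty$, and this can fail even with zero generic Lelong number along $G$. For example, $\Theta_0=\sum_k c_k[H_k]$ with $H_k=\{z_0=\varepsilon_k z_1\}$, $c_k=2^{-k}$, $\varepsilon_k=e^{-4^k}$, has generic Lelong number $0$ along $G=\{z_0=0\}$ (at a point of $G$ off $\{z_0=z_1=0\}$ the trace measure of a ball of radius $r$ is at most $\pi r^2\sum_{k\ge K_0(r)}c_k$ with $K_0(r)\to\infty$), yet its potential restricts to $\sum_k c_k\log\varepsilon_k+O(1)=-\infty$ identically on $G$. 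This issue occurs twice in your argument: in the inductive step of the lemma and when you restrict $R'$ to $H_i$. The standard repair is exactly the device the paper uses in the proof of Lemma \ref{L:proj}: apply Demailly's regularization theorem \cite{D92} to obtain currents $R_\varepsilon$ with analytic singularities satisfying $\nu(R_\varepsilon,x)\ge\nu(R,x)-\varepsilon$, which are smooth near the generic point of the hyperplane (where $\nu(R,\cdot)=0$), restrict those, and let $\varepsilon\searrow0$. With this modification your proof is complete.
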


We start with the following lemma:

\begin{Lemma}\label{L:proj}
Let $T$ be a positive closed current of bidegree $(1,1)$ on ${\mathbb P}^n$ such that $\|T\|=1$. Assume that, for some $\nu>0$, there exist linearly independent points $p_1,\ldots,p_n\in E_\nu(T)$. Then $T\geq(n\nu-n+1)[H]$, where $H$ is the hyperplane spanned by $p_1,\ldots,p_n$. 
\end{Lemma}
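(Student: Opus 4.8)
The plan is to show that $T$ has Lelong number at least $n\nu - n + 1$ at the generic point of the hyperplane $H$ spanned by $p_1, \ldots, p_n$, and then invoke Siu's decomposition theorem to split off the corresponding multiple of the current of integration $[H]$. Since $T$ is positive and closed, Siu's theorem gives $T = \sum_k \lambda_k [Z_k] + R$ where $R$ has generic Lelong number zero along every hypersurface, so it suffices to prove that the generic Lelong number of $T$ along $H$ is at least $n\nu - n + 1$.

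First I would reduce to the local picture on $\mathbb{C}^n$. Choose affine coordinates so that $H$ becomes a coordinate hyperplane and the $n$ linearly independent points $p_1, \ldots, p_n$ become convenient base points; concretely, after applying an automorphism of $\mathbb{P}^n$ I can normalize the configuration. Writing $T = \omega_{FS} + dd^c\varphi$ with $\varphi$ an $\omega_{FS}$-psh function, the associated potential $u \in \mathcal{L}(\mathbb{C}^n)$ has Lelong number $\nu$ at each of the points corresponding to $p_1, \ldots, p_n$. The generic Lelong number of $T$ along $H$ equals the generic Lelong number of $u$ along the coordinate hyperplane, which I want to bound below.

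The core estimate will come from restricting $u$ to generic complex lines and using the one-variable theory of Lelong numbers together with the logarithmic growth condition. The key mechanism is that a psh function in the Lelong class with total mass one (i.e.\ slope at infinity equal to $1$) that already has Lelong number $\nu$ concentrated at $n$ independent points must ``use up'' most of its available mass; what remains forces a large generic Lelong number along the hyperplane through those points. I expect the precise way to extract the constant $n\nu - n + 1$ is to consider a generic line $\ell$ meeting $H$ transversally and to estimate the Lelong number of the restriction $u|_\ell$ at the intersection point $\ell \cap H$. Along such a line, the function $u|_\ell$ lies in $\mathcal{L}(\mathbb{C})$ (slope $1$), and by convexity of the averages $r \mapsto \lambda(u, r)$ of $u$ over spheres, the Lelong numbers at the images of $p_1, \ldots, p_n$ on nearby lines combine with the slope-one growth to pin down the residual mass at $\ell \cap H$. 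Projecting $u$ from the points and comparing the Lelong numbers before and after projection, as in the proof of Proposition \ref{P:proj}(iii), should yield the linear bound $n\nu - (n-1)$.

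The main obstacle will be organizing the mass-counting so that the contributions of the $n$ separate singular points at Lelong level $\nu$ add up correctly against the total mass one and leave exactly $n\nu - n + 1$ along $H$, rather than an inequality that is off by lower-order terms. I anticipate this is where one must be careful: the naive sum $n\nu$ overcounts because the points lie on $H$ and their singularity cones overlap near $H$. The cleanest route is probably an inductive or projective argument — project from one of the points $p_i$, reducing $n$ and $\nu$ in a controlled way, and track how the generic Lelong number along the image of $H$ transforms under projection — rather than a direct integral-geometric computation. If the inductive bookkeeping proves delicate, an alternative is to test $T$ against the positive closed current $[\ell]$ for a generic line and use $\langle T \wedge [\ell], \cdot\rangle$ together with the already-established bound $\nu(T, p_i) \geq \nu$ to obtain the estimate directly from intersection-theoretic positivity.
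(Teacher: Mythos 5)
Your skeleton---bound the generic Lelong number $\alpha$ of $T$ along $H$ from below by $n\nu-n+1$, then invoke Siu decomposition---is the same as the paper's, but the core estimate is never actually carried out, and none of the mechanisms you sketch can produce it. The decisive ingredient in the paper is the multi-point structure theorem \cite[Proposition 2.2]{CG09} applied on $H\equiv{\mathbb P}^{n-1}$: writing $T=\alpha[H]+(1-\alpha)R$ with $\|R\|=1$, one has $\nu(R,p_j)\geq\frac{\nu-\alpha}{1-\alpha}$ (because $p_j\in H$, so $\nu([H],p_j)=1$); after Demailly regularization---needed because $R$ cannot simply be restricted to $H$ with control on Lelong numbers---the restriction to $H$ is a unit-mass bidegree $(1,1)$ current $S$ on ${\mathbb P}^{n-1}$ whose set $E^+_{(n-1)/n}(S)$ lies in a hyperplane of $H$. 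Since $p_1,\ldots,p_n$ span $H$, some $p_j$ has $\nu(S,p_j)\leq\frac{n-1}{n}$, forcing $\frac{\nu-\alpha}{1-\alpha}\leq\frac{n-1}{n}$, i.e. $\alpha\geq n\nu-n+1$. Nothing in your proposal plays the role of this theorem: convexity of spherical means concerns a single point, and the projection argument of Proposition \ref{P:proj}(iii) applies only when the Lelong number equals the \emph{full} mass, whereas here the entire difficulty is that $n$ points, each at the sub-maximal level $\nu$, jointly force mass along $H$. That is exactly the content of \cite[Proposition 2.2]{CG09}, which you would either have to cite or re-prove, and your sketch does neither.

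Moreover, your fallback of testing $T$ against $[\ell]$ for a line $\ell$ demonstrably cannot work: the line through any two of the points $p_i,p_j$ lies in $\Span\{p_i,p_j\}\subset H$, so any line not contained in $H$ meets at most one $p_i$, and intersection positivity then yields only upper bounds of the form $\nu\leq1$ and $\alpha\leq1$, never a lower bound on $\alpha$. Similarly, a generic line transversal to $H$ misses all the $p_i$ entirely, so its one-variable restriction carries no trace of the singularities at those points; the way $n$ separated point singularities force a generic Lelong number along the hyperplane through them is a global phenomenon on $H$ that per-line slicing and one-variable convexity cannot detect. The proposal therefore has a genuine gap at precisely the step where all the work lies.
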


\begin{proof} The lemma is obviously true for $n=1$, so we assume $n\geq2$. We write $T=\alpha[H]+(1-\alpha)R$, where $\alpha$ is the generic Lelong number of $T$ along $H$ and $R$ is a positive closed current of bidegree $(1,1)$ on ${\mathbb P}^n$ such that $\|R\|=1$. Using Demailly's regularization theorem \cite{D92} we infer that there exists, for each $\varepsilon>0$, a positive closed current $R_\varepsilon=\omega_{FS}+dd^c\varphi_\varepsilon$ on ${\mathbb P}^n$ such that $\nu(R_\varepsilon,p_j)>\nu(R,p_j)-\varepsilon$, $1\leq j\leq n$, and $R_\varepsilon$ is smooth near each point $p$ where $\nu(R,p)=0$, hence near the generic point of $H$. Therefore 
\[S:=R_\varepsilon\mid_H=\omega_{FS}+dd^c(\varphi_\varepsilon\mid_H)\]
is a well defined positive closed current on $H\equiv{\mathbb P}^{n-1}$ and 
\[\nu(S,p_j)\geq\nu(R_\varepsilon,p_j)>\nu(R,p_j)-\varepsilon\geq\frac{\nu-\alpha}{1-\alpha}-\varepsilon\,,\,\;1\leq j\leq n\,.\]

By \cite[Proposition 2.2]{CG09} we have that $E^+_{(n-1)/n}(S)$ is contained in a hyperplane of ${\mathbb P}^{n-1}$. Since the points $p_1,\ldots,p_n$ are in general position, it follows that $p_j\not\in E^+_{(n-1)/n}(S)$ for some $j$. Hence 
\[\frac{n-1}{n}\geq\nu(S,p_j)>\frac{\nu-\alpha}{1-\alpha}-\varepsilon\;.\]
Letting $\varepsilon\searrow0$ this implies that $\alpha\geq n\nu-n+1$, so $T=\alpha[H]+(1-\alpha)R\geq(n\nu-n+1)[H]$. 
\end{proof}

\smallskip

\begin{proof}[Proof of Theorem \ref{T:proj}] Assume that $E_{n/(n+1)}(T)$ is not contained in any hyperplane of ${\mathbb P}^n$. Then there exists a set $S=\{p_0,\ldots,p_n\}\subset E_{n/(n+1)}(T)$ such that $\Span S={\mathbb P}^n$. If $H_j=\Span(S\setminus\{p_j\})$, then by Lemma \ref{L:proj}, 
\[T\geq\left(n\,\frac{n}{n+1}-n+1\right)[H_j]=\frac{1}{n+1}\,[H_j]\,,\,\;0\leq j\leq n\,.\]
Hence by Siu's decomposition theorem \cite{Siu74},
\[T\geq T':=\frac{1}{n+1}\,\sum_{j=0}^n\,[H_j]\,.\]
Since both currents $T,T'$ have unit mass it follows that $T=T'$.
\end{proof}


\begin{thebibliography}{XXXXX}

\bibitem{C06} D. Coman, {\em Entire pluricomplex Green functions and Lelong numbers of projective currents}, Proc. Amer. Math. Soc. {\bf 134} (2006), 1927--1935.

\bibitem{CG09} D. Coman and V. Guedj, {\em Quasiplurisubharmonic Green functions}, J. Math. Pures Appl. (9) {\bf 92} (2009), 456--475.

\bibitem{CT15} D. Coman and T. T. Truong, {\em Geometric properties of upper level sets of Lelong numbers on projective spaces}, Math. Ann. {\bf 361} (2015), 981--994.

\bibitem{D92} J. P. Demailly, {\em Regularization of closed positive currents and intersection theory}, J. Algebraic Geom. {\bf 1} (1992), 361--409.

\bibitem{D93} J. P. Demailly, {\em Monge-Amp\`ere operators, Lelong numbers and intersection theory}, in {\em Complex analysis and geometry}, Plenum, New York, 1993, 115--193.

\bibitem{FG01} C. Favre and V. Guedj, {\em Dynamique des applications rationnelles des espaces multiprojectifs}, Indiana Univ. Math. J. {\bf 50} (2001), 881--934.

\bibitem{FS95d} J. E. Forn\ae ss and N. Sibony, {\em Complex dynamics in higher dimension. II}, Modern methods in complex analysis (Princeton, NJ, 1992), 135--182, Ann. of Math. Stud., {\bf 137}, Princeton Univ. Press, Princeton, NJ, 1995.

\bibitem{GZ05} V. Guedj and A. Zeriahi, {\em Intrinsic capacities on compact K\"ahler manifolds}, J. Geom. Anal. {\bf 15} (2005), 607--639.

\bibitem{He17} J. J. Heffers, {\em A property of upper level sets of Lelong numbers of currents on ${\mathbb P}^2$}, Internat. J. Math. {\bf 28} (2017), no. 14, 1750110, 18 pp.

\bibitem{He19} J. J. Heffers, {\em On Lelong Numbers of Positive Closed Currents on ${\mathbb P}^n$}, Complex Var. Elliptic Equ. {\bf 64} (2019), 352--360.

\bibitem{Ho} L. H\"ormander, {\em Notions of Convexity}, Birkh\"auser, 1994.

\bibitem{Si99} N. Sibony, {\em Dynamique des applications rationnelles de ${\mathbb P}^k$}, Dynamique et g\'eom\'etrie complexes (Lyon, 1997), 97--185, Panor. Synth\`eses, {\bf 8}, Soc. Math. France, Paris, 1999.

\bibitem{Siu74} Y. T. Siu, {\em Analyticity of sets associated to Lelong numbers and the extension of closed positive currents}, Invent. Math. {\bf 27} (1974), 53--156.

\end{thebibliography}
\end{document}